\newcommand{\keywords}[1]{{\bf Keywords.}~#1}
\newcommand{\subclass}[1]{{\bf MSC2010.}~#1}
\newcommand{\revision}[1]{\textcolor{black}{#1}}
\newcommand{\minor}[1]{\textcolor{black}{#1}}
\newtheorem{definition}{Definition}
\newtheorem{remark}{Remark}
\newtheorem{proposition}{Proposition}
\newtheorem{lemma}{Lemma}
\newcommand{\CWENO}{\ensuremath{\mathsf{CWENO}}}
\newcommand{\CWENOZ}{\ensuremath{\mathsf{CWENOZ}}}
\newcommand{\WENO}{\ensuremath{\mathsf{WENO}}}
\newcommand{\WENOZ}{\ensuremath{\mathsf{WENOZ}}}
\newcommand{\WENOAO}{\ensuremath{\mathsf{WENO\mbox{-}AO}}}
\newcommand{\CWENOZAO}{\ensuremath{\mathsf{CWENOZ\mbox{-}AO}}}
\newcommand{\ENO}{\ensuremath{\mathsf{ENO}}}
\newcommand{\WAOBS}{\ensuremath{\mathsf{WAO\mbox{-}BGS}}}
\newcommand{\WAOAHZ}{\ensuremath{\mathsf{WAO\mbox{-}AHZ}}}
\newcommand{\CWZ}{\ensuremath{\mathsf{CWZ753}}}
\renewcommand{\vec}[1]{\mathbf{#1}}
\newcommand{\R}{\mathbb{R}}
\newcommand{\Ogrande}{\mathcal{O}}
\newcommand{\Opiccolo}{o}
\newcommand{\dx}{\mathrm{d}x}
\renewcommand{\d}{\mathrm{d}}
\newcommand{\ca}[1]{\overline{#1}}
\newcommand{\DX}{\mathrm{\Delta}x}
\newcommand{\Poly}[1]{\mathbb{P}^{#1}}
\newcommand{\Prec}{P_{\text{\sf rec}}}
\newcommand{\Popt}{P_{\text{\sf opt}}}
\newcommand{\Sopt}{\mathcal{S}_{\text{\sf opt}}}
\newcommand{\Sk}{\mathcal{S}_{k}}
\newcommand{\SQk}{\widetilde{\mathcal{S}}_{k}}
\newcommand{\pder}[2]{\frac{\partial #1}{\partial #2}}
\begin{document}

\title{Efficient implementation of adaptive order reconstructions%
 \thanks{This work was supported by the Deutsche Forschungsgemeinschaft (DFG, German Research Foundation) under Germany's Excellence Strategy -- EXC-2023 Internet of Production -- 390621612
 and 
 by INDAM GNCS-2019 grant ``Approssimazione numerica di problemi di natura iperbolica ed applicazioni''.
 }
}

\author{
	M. Semplice 
	\thanks{
		Dipartimento di Matematica -
		Universit\`a dell'Insubria --
		Via Valleggio, 11 - Como (Italy) --
		{\sl matteo.semplice@uninsubria.it}
	}
	\and
	G. Visconti
	\thanks{
		Institute f\"{u}r Geometrie und Praktische Mathematik -
		RWTH Aachen University --
		Templergraben 55, 52062 Aachen, Germany --
		{\sl visconti@igpm.rwth-aachen.de}
	}
}


\date{\today}

\maketitle

\begin{abstract}
Including polynomials with small degree and stencil when designing very high order  reconstructions is surely beneficial for their non oscillatory properties, but may bring loss of accuracy on smooth data unless special care is exerted.
In this paper we 
 address this issue with a new 
{Central \WENOZ\ (\CWENOZ)} approach, in which 
{the reconstruction polynomial is computed from a single set of non linear weights, but}
the linear weights of the polynomials with very low degree (compared to the final desired accuracy) are
{infinitesimal with respect to the grid size.} 
After proving general results that guide the choice of the \CWENOZ\ parameters, we study a concrete example of a reconstruction that blends polynomials of degree six, four and two,  mimicking  already published Adaptive Order \WENO\ reconstructions \cite{BGS:wao,AHZ18:wao}.
The {novel} reconstruction yields similar accuracy and oscillations with respect to the {previous} ones, but saves up to 20\% computational time since it does not rely on a hierarchic {approach} and thus does not compute multiple sets of nonlinear weights in each cell.

\keywords{
	\CWENOZAO\ --
	polynomial reconstruction --
	weighted essentially nonoscillatory -- 
	\CWENOZ --
	adaptive order \WENO -- 
	finite volume schemes -- 
	hyperbolic systems --
	conservation and balance laws
}

\subclass{
65D05 
\and 65M08 
\and 65M12 
\and 76M12 
}
\end{abstract}

\section{Introduction} \label{sec:AOintroduction}

This paper presents a novel, non hierarchic, construction that yields very high order essentially non oscillatory reconstructions that are useful in
high order numerical schemes for conservation laws.

Let us consider the balance law $\partial_t u + \partial_x f(u) = s(u)$ and discretize the domain in cells $\Omega_i$ for $i=1,\ldots,N$. Following the method of lines, we introduce the cell averages $\ca{u}_i(t) = \int_{\Omega_i} u(t,x) \mathrm{d}x$ and compute their approximations $\ca{U}_i(t)$ by numerically integrating the system of ordinary differential equations
\[
\begin{cases}
\frac{\mathrm{d}}{\mathrm{d}t}\ca{U}_i(t) = -\frac{1}{|\Omega_i|} [\mathcal{F}_{i+\nicefrac12}(t)-\mathcal{F}_{i-\nicefrac12}(t)]
+\mathcal{S}_i(t)
\\
\ca{U}_i(0)=\ca{u}_i(0)
\end{cases}
.
\]
Both the numerical fluxes $\mathcal{F}_{i\pm\nicefrac12}$ at the interfaces and the numerical source terms $\mathcal{S}_i$ are computed with the help of a reconstruction operator; this latter derives pointwise approximations $R_i(t,x)$, at location $x$ in the cell $i$, from the cell averages $\ca{U}_{i-l}(t),\ldots,\ca{U}_{i+r}(t)$, for some $l,r\geq0$. For a numerical flux function $F$ compatible with $f$, we define
$\mathcal{F}_{i+\nicefrac12}(t)=F\big(R_i(t,x_{i+\nicefrac12}) , R_{i+1}(t,x_{i+\nicefrac12})\big)$ and
the numerical source term is 
computed as
$\mathcal{S}_i(t) = \sum_{q=0}^{N_q} w_q s(R_i(t,x_q))$,
where $x_q$ and $w_q$ are the nodes and weights of a quadrature formula on $\Omega_i$.

The reconstruction operator should yield an accurate but non oscillatory pointwise approximation of the unknown, based on its cell averages.
Beyond the second order of accuracy, one considers essentially non oscillatory reconstructions, which are typically realized by selecting (as in \ENO\ \cite{HEOC:87}) or more commonly by blending in a nonlinear way polynomials with different degrees and/or stencils.
In particular \WENO, which was introduced in 
\cite{Shu97}, considers a set  of polynomials with equal degree but different stencils and aims at reproducing the accuracy of a higher degree interpolant when the data are locally smooth. The literature on this subject is vast and the reader may refer to \cite{Shu:97}
for a review.

The suboptimal accuracy close to critical points shown by the original design has been later overcome by new definitions of the nonlinear weights (e.g. mapped \WENO~\cite{HAP:2005:mappedWENO},  \WENOZ~\cite{BCCD:2008:wenoz5,CCD:11}, the global average weight of \cite{Baeza:19:CWENOglobalaverageweight}) or by taking the small $\epsilon$ parameter to be dependent on the local mesh size \cite{Arandiga:11}.

Another source of difficulty in \WENO\ reconstructions is the possible non-existence or non-positivity of the linear weights for certain grid {types} and reconstruction points \cite{ShiHuShu:2002}. 
A quite successful proposal 
{to address this issue} 
was put forward by Levy, Puppo and Russo in \cite{LPR:00:SIAMJSciComp} for the case of achieving a third order accurate reconstruction at cell center in one and two-dimensional uniform grids.
Their \CWENO3 reconstruction 
is a nonlinear blend of one second degree polynomial and of some first degree ones; this approach
frees the linear weights from having to satisfy accuracy requirements and allows to choose them arbitrarily, independently of the reconstruction point, independently of the grid type (Cartesian/unstructured, uniform/non-uniform, conforming/non-conforming, etc). Consequently no issues regarding their existence and positivity is present.
The accuracy of the \CWENO3 has been studied in
\cite{Kolb:14,CS:epsweno}.

The idea at the base of \CWENO3,
namely the use of the polynomial $P_0$ as in equation \eqref{eq:p0},
has been exploited in different setups.
Novel reconstructions of different orders of accuracy
appeared 
under various names
in the literature for the cases of one 
\cite{Capdeville:08,Baeza:19:CWENOglobalaverageweight},
two
\cite{Capdeville:11:cwenotriangular,SCR:CWENOquadtree,DBSR:ADER_CWENO,CWENOandaluz}
and three space dimensions
\cite{ZhouCai:08,LP:12,ZQ:CW3:tetrahedra,DBSR:ADER_CWENO}.
Among those, \cite{Capdeville:08,Capdeville:11:cwenotriangular,SCR:CWENOquadtree,ZQ:CW3:tetrahedra,DBSR:ADER_CWENO}
consider non-uniform grids.
Applications to stochastic Galerkin have been considered in \cite{GersterHerty} and to Hamilton-Jacobi equations in \cite{ZSQ:19:FDCWENO_HJ}.

One of the advantages of this approach is the possibility to achieve genuinely multi-dimensional reconstructions that do not rely on dimensional splitting and that are, theoretically and in practice, not more challenging than the one-dimensional counterpart (see e.g. \cite{SCR:CWENOquadtree} for AMR grids and \cite{ZQ:CW3:tetrahedra,DBSR:ADER_CWENO,DBSR:DG_CWENO} for simplicial ones).

\CWENO\ reconstructions of arbitrary high orders have been studied in \cite{CPSV:cweno} and the use of Z-weights has been pursued in \cite{CPSV:coolweno,CSV19:cwenoz}.%
A very important result of \cite{CSV19:cwenoz} is an analysis of the multidimensional oscillation indicators, leading to general results  that support the design of Z-weights at arbitrary order for very general one- and multi-dimensional finite volume grids.

The optimal convergence rate on smooth data can be easily achieved when the degree gap 
between the central high order and the lower degree 
polynomials is not too high
(see \cite{CPSV:cweno,CSV19:cwenoz}). 
Thus, in the design of very high order reconstructions, one has to employ ``low degree'' polynomials whose stencils are still quite large and that, as a consequence, are not very good at avoiding discontinuities in complex multi-dimensional flows.
For this reasons, many researchers have violated the classical hypothesis on the degree gap between high and low order polynomials when designing their reconstructions.
For example, 
small-stencil polynomials of degree one, irrespectively of the degree of the central polynomial 
in \cite{ZhuQiu:CW5P4P1,ZQ:CW4P3P1:triangular,DBSR:ADER_CWENO,CWENOandaluz}.

However, in order to include very low order polynomials in the pool of candidate reconstruction polynomials, special care must be exerted. 
A leap forward has been the proposal of the Adaptive Order $\WENOAO(r_{l},\ldots,r_{2};r_{1})$
by Balsara, Garain and Shu \cite{BGS:wao}.
For $l=2$ they essentially coincide with  $\CWENOZ$ reconstructions of \cite{CSV19:cwenoz}, and for $l>2$ the reconstruction is a blend of the reconstruction polynomials given by $\WENOAO(r_{k};r_1)$ for $k=2,\ldots,l$.
This hierarchic approach effectively enhances the stability of the reconstruction of order $r_{l}$,
but reduces the accuracy suddenly to $r_1$ whenever a discontinuity is present the stencils of both the polynomials of level $l$ and $l-1$.
Arbogast, Huang and Zhao in
\cite{AHZ18:wao} introduced a new hierarchical construction that instead is capable of reducing gradually the accuracy from $r_l$ to $r_1$ as the discontinuity moves inward in the reconstruction stencil.
$\WENOAO(7,5,3)$ and $\WENOAO(9,7,5,3)$ are considered in the numerical examples of the papers.

It should nevertheless be noted
that very high order Weighted Essentially Non-Oscillatory reconstructions are quite computationally intensive. For example, profiling the code 
{\tt claw1dArena}%
\footnote{
	For this test, {\tt claw1dArena} was compiled with the GNU Compiler and {\tt -O3} optimization level, profiling data were collected with the {\tt callgrind} utility of the {\tt valgrind} suite and analyzed with {\tt kcachegrind}. The data reported refer to the linear advection of the Jiang-Shu profile and to the Lax shock tube with characteristic projection.
}
\cite{claw1dArena} revealed that \CWENO, \CWENOZ\ and \WENOAO\ reconstructions of order 7 consume up to 80\% of the CPU time when 
the method of lines and
a simple numerical flux as local Lax-Friedrichs is employed. Turning on the local characteristic projection technique reduces the cost  of the reconstruction to around $50\div60\%$, which is still quite high.
This fully justifies the efforts in reducing the computational costs of very high order reconstructions. An example is given by the polynomial basis proposed in \cite{BGS:wao} that reduces the computational cost of the oscillation indicators.

In this paper we propose a novel approach to reduce the cost of adaptive order reconstructions.
In \S\ref{sec:AOpreliminary}, we revise the standard \CWENO\ \cite{CPSV:cweno} and \CWENOZ\ \cite{CSV19:cwenoz} reconstructions, as well as the \WENOAO\
reconstructions of \cite{BGS:wao,AHZ18:wao}.
In our approach, introduced and studied in 
\S\ref{sec:AOreconstruction}, we replace the hierarchical computation of nonlinear weights of \cite{BGS:wao,AHZ18:wao} with a hierarchy of scales in the linear weights
and only a set of nonlinear weights is computed.  
This hierarchy is realized by choosing, in a \CWENOZ\ approach, linear weights for the very low degree polynomials that are grid-size dependent. More precisely, for a reconstruction of accuracy $G+1$, all candidate polynomials of degree smaller than $G/2$ will have a linear weight of size $\Ogrande(\DX^{r})$ for some $r>0$.
The analysis in \S\ref{sec:AOreconstruction} indicates how the exponent $r$ should be chosen in order to guarantee both that, on the one hand, the accuracy of the reconstruction is not reduced in case of smooth data and, on the other hand, the non-oscillatory properties of the reconstruction are boosted by the very small stencils of some candidate polynomials close to discontinuities. 
In \S\ref{sec:AOsimulations} we report several numerical tests demonstrating that our new approach yields results that are comparable in accuracy to those obtained with the reconstructions in \cite{BGS:wao,AHZ18:wao}, but save up to $20\%$ of computational time.
Conclusions and perspectives for future work are discussed in \S\ref{sec:AOconclusion}.

\section{Review of $\CWENO$ based reconstructions} \label{sec:AOpreliminary}

Before introducing the new adaptive order $\CWENOZ$ reconstruction, we recall the definition of the classical $\CWENO$ reconstruction operator. Later in this section, $\CWENOZ$ and $\WENOAO$ will be introduced,
discussing their properties, but for a detailed analysis we point the reader to~\cite{CPSV:coolweno,CPSV:cweno,CS:epsweno} for $\CWENO$, to~\cite{CPSV:coolweno,CSV19:cwenoz} for $\CWENOZ$ and to~\cite{AHZ18:wao,BGS:wao,KC:18} for $\WENOAO$.

Throughout the paper we restrict the presentation to the scalar case since usually the reconstruction procedures are applied component-wise, directly to the conserved variables or after the local characteristic projection. We also concentrate on the one-dimensional setting, as multiple space dimensions may be treated either trivially by splitting or by extending the present hierarchic procedure to truly multi-dimensional \CWENOZ\ reconstructions along the lines of \cite{SCR:CWENOquadtree,DBSR:ADER_CWENO,ZQ:CW3:tetrahedra,ZQ:CW4P3P1:triangular,Capdeville:11:cwenotriangular}.

In order to describe a reconstruction procedure, we consider as given data the  cell averages $\ca{u}_k$ of a function ${u}$ over the cells of a grid that is composed by cells $\Omega_k\subset \R$. A reconstruction aims to recover point-wise information on $u$ in the interior and at the boundaries of a cell, using the knowledge of cell averages of $u$ in that cells and its neighbors.
To simplify the notation, we describe the reconstruction  in a cell $\Omega_0$ of size size $\Delta x$ and centered at the point $x_0= 0$.

\begin{definition} \label{def:conservation}
Let $\mathcal{S}$ be a set of cells containing $\Omega_0$.
The polynomial $P$ associated to the stencil $\mathcal{S}$ is the polynomial that satisfies the  conservation property
	\[ \tfrac{1}{\Delta x} \int_{\minor{\Omega_j}} P(x) \dx = 
	\minor{\ca{u}_j}, \quad \forall\,\minor{\Omega_j}\in\mathcal{S}.
	\]
\end{definition}

\minor{The interpolation property of all polynomials employed in this paper is intended in the sense of Definition~\ref{def:conservation}.} We point out that, especially in multi-dimensions, it is often convenient to relax the above requirement and employ least-squares fitted polynomials	as in \cite{HuShu:WENOtri,Capdeville:11:cwenotriangular,SCR:CWENOquadtree,DBSR:ADER_CWENO,ZQ:CW3:tetrahedra,ZQ:CW4P3P1:triangular}
and that the results of this paper, which rely only on the approximations errors of polynomials should easily extend to this more general setup.

Next we recall the Definition of the Jiang-Shu oscillation indicators.
Let $\Poly{k}$ be the space of polynomials with degree at most $k\in\mathbb{N}$.

\begin{definition}[see~\cite{JiangShu:96}] \label{def:ind}
	The smoothness indicator of a polynomial $P\in\Poly{k}$ is
	\begin{equation} \label{eq:ind}
	I[P] := 
	\sum_{i=1}^k  \Delta x^{2i-1} \int_{\Omega_0} \left(\frac{\d^i}{\dx^i} P(x)\right)^2 \dx. \end{equation}
\end{definition}

We recall that $I[P]=\Ogrande(1)$ even if a discontinuity is present in the stencil of $P$ and that $I[P]\to0$ under grid refinement if $P$ is associated to smooth data.

\revision{We point out that Definition~\ref{def:ind} for the smoothness indicator is not
	adequate for classes of problems where the solution is expected to be
	continuous and for example for Hamilton-Jacobi equations the summation
	in~\eqref{eq:ind} should start from $i=2$ in order to disregard the contribution of
	the $L^2$ norm of the first derivative~\cite{JiangPeng:00,FPT:AdaptiveFilteredHJ
	}. Also, different
	approaches to define smoothness indicators have been explored in the
	literature: see e.g.~\cite{HKLY:2013} for a definition based on $L^1$ norms. Of
	course, in order to employ different indicators in the constructions of
	this paper, one would need to first prove a counterpart of Proposition~\ref{th:proposition} of \S\ref{sssec:analysisSmooth}.}


\subsection{$\CWENO$ reconstructions} \label{ssec:cweno}

We recall here the general definition of a $\CWENO$ reconstruction, which was originally given in~\cite{LPR:00:SIAMJSciComp} for schemes of order $3$
and later
generalized to arbitrary order and analyzed in~\cite{CPSV:cweno}.

\begin{definition}[$\CWENO$ operator] \label{def:CWENO}
	Given a stencil $\Sopt$ \revision{of $G+1$ cells} that includes $\Omega_0$, let $\Popt\in\Poly{G}$ \emph{(optimal polynomial)} be the polynomial of degree $G$ which interpolates all the given data in $\Sopt$. Further, let $ {P}_1, {P}_2, \ldots, {P}_m$ be a set of $m\geq 1$ polynomials of degree $g$ with $g<G$ \revision{such that, for $k=1,\dots,m$, $P_k$ interpolates the cell averages of a sub-stencil $\mathcal{S}_k$, chosen such that} \revision{$\Omega_0\in\Sk\subset\Sopt$}.
	Let also $\{d_k\}_{k=0}^m$ be a set of strictly positive real coefficients such that $\sum_{k=0}^m d_k=1$.
	
	The $\CWENO$ operator computes a reconstruction polynomial
	\[ 
	\Prec 
	= \CWENO ({\Popt};{P}_1,\ldots,{P}_m) \in \Poly{G} 
	\]
	as follows:
	\begin{enumerate}
		\item  introduce the polynomial ${P}_0$ defined as
		\begin{equation} \label{eq:p0}
		{P}_0(x) = \frac{1}{d_0}\left({\Popt}(\vec{x})-\sum_{k=1}^{m}d_k {P}_k(x) \right) \in \Poly{G};
		\end{equation}
		\item compute the regularity indicators
		\[
		I_0=I[\Popt], 
		\qquad
		I_k=I[P_k], k\geq1
		\]
		\item compute  the \minor{nonlinear} coefficients $\{\omega_k\}_{k=0}^m$ as
		\begin{equation} \label{eq:Omega}
		\alpha_k = \frac {d_k} {\left( I_k+\epsilon \right)^{\ell}},
		\qquad
		\omega_k = \frac{\alpha_k}{\sum_{i=0}^{m}\alpha_i},
		\end{equation}
		where $\epsilon$ is a small positive quantity, and
		$\ell \ge 1$
		\item  define the reconstruction polynomial as
		\begin{equation}
		\Prec(x) 
		= \sum\limits_{k=0}^{m} \omega_k {P}_k(x) \in\Poly{G}. \label{eq:precCW}
		\end{equation}
	\end{enumerate}
\end{definition}

We point out that the use of the additional polynomial $P_0$ defined in equation~\eqref{eq:p0} is what characterizes a Central $\WENO$ based reconstruction. 
This
allows to employ linear coefficients $\{d_k\}_{k=0}^m$ which do not depend on the reconstruction point and consequently the $\CWENO$ operator defines a reconstruction polynomial which is globally defined and uniformly accurate on the computational cell. $\Prec$ can be later evaluated at any point, and no a-priori knowledge of reconstruction point is exploited in the computation of $\Prec$. This 
constitutes
also a computational gain since the \minor{nonlinear} coefficients can be computed once per cell and not once per reconstruction point, as in the standard $\WENO$~\cite{Shu:97}. This makes the $\CWENO$ 
procedures
more appealing for balance laws, multidimensional computations and unstructured meshes than 
their $\WENO$ counterparts. \revision{It should be noted that the reconstruction proposed in~\cite{DumbserKaeser:2007} also computes a single set of nonlinear weights per cell, but it requires much larger stencils than $\CWENO$.}

Next, 
we briefly present some results shared by all $\CWENO$ 
reconstructions.

\begin{remark}
	Since all the interpolating polynomials involved in Definition~\ref{def:CWENO} satisfy the conservation property on the reconstruction cell $\Omega_0$,
	$P_0$ is conservative \minor{on $\Omega_0$} in the sense of Definition~\ref{def:conservation} and thus also
	$\tfrac{1}{\Delta x} \int_{\Omega_0}\Prec(x) \dx 
	=\ca{u}_0
	$,
	which shows that the $\CWENO$ operator is conservative.
\end{remark}

The accuracy and non-oscillatory properties of $\CWENO$ based schemes are guaranteed by the dependence of their \minor{nonlinear} weights on the Jiang-Shu regularity indicators.  Since for any sufficiently regular function $u(x)$ the approximation orders of $\Popt \in \Poly{G}$ and $P_k \in \Poly{g}$, for $k=1,\ldots,m$, are
$$
\vert \Popt (x) - u(x) \vert 
= \Ogrande \big(\Delta x^{G+1}\big) 
\quad \text{and} \quad
\vert P_k (x) - u(x) \vert 
= \Ogrande \big(\Delta x^{g+1}\big) 
  \qquad
  \forall x\in\Omega_0,
$$ 
it is immediate to show that the reconstruction error at $x$ is at least of order $g+1$. Therefore, on smooth data, the \minor{nonlinear} weights must be designed so that the accuracy of the reconstruction polynomial $\Prec$ is boosted to the accuracy of $\Popt$, i.e. $G+1$. More precisely, if $d_k-\omega_k$ is at least $\Ogrande(\Delta x^{G-g})$ then the accuracy of the $\CWENO$ reconstruction equals the accuracy of $\Popt$. Provided that $G \leq 2g$ this condition is verified if $\epsilon = \Ogrande(\Delta x^{\hat{m}})$, $\hat{m}=1,2$, see~\cite[Proposition 1]{CPSV:cweno}.

On the other hand, if there were
an oscillating polynomial $P_{\hat{k}}$ for some $\hat{k}\in\{1,\dots,m\}$, then $I_{\hat{k}}\asymp1$\revision{, i.e.~$\lim_{\Delta x\to 0} I_{\hat{k}}$ exists, is finite and nonzero, and thus the} corresponding \minor{nonlinear} weight tends to $0$; moreover, as proved in~\cite{CPSV:cweno}, also the \minor{nonlinear} weight of $P_0$ would tend to $0$ and the reconstruction polynomial $\Prec$ would become a nonlinear combination of polynomials of degree $g$: the accuracy of the reconstruction reduces to $g+1$, but spurious oscillations in the PDE solution would be controlled.

The positive parameter $\epsilon$ prevents the division by zero in the computation of the \minor{nonlinear} weights~\eqref{eq:Omega}. In \cite{CS:epsweno,Kolb:14}, the authors proved that the choice of $\epsilon$ can influence the convergence of the method on smooth parts of the solution. For instance, choosing $\epsilon = \Ogrande(\Delta x^{\hat{m}})$, for some $\hat{m}\in\mathbb{N}$, helps to have a more
regular convergence history than taking $\epsilon$ as fixed value. A convergence analysis of the scheme on smooth data gives a range of values for $\hat{m}$ that guarantees optimal order; within this range one would take $\hat{m}$ as large as possible in order to avoid spurious oscillations on discontinuous data. In the following we always consider 
a mesh-size dependent $\epsilon$.

In Definition~\ref{def:CWENO} the number $m$ and the degree $g$ of the lower-degree polynomials is not specified nor linked to the degree $G$ of the optimal polynomial. However, the relation $G\leq 2g$ is required for accuracy and in a one space dimension reconstruction of order $2r-1$ it is customary to choose $g=r-1$, $m=g+1$ and $\Popt$ of degree $G=2r-2$. This latter is determined by the exact interpolation of the data in a symmetric stencil $\cal{S}_{\text{opt}}$ centered on $\Omega_0$ containing the cells $\Omega_{-g},\dots,\Omega_{g}$. Furthermore, for $k=1, \cdots, m$, the lower-degree polynomials ${P}_k$ are defined as the exact interpolants on the substencils
\revision{${\cal S}_k=\{\Omega_{k-r},\ldots,\Omega_{k-1} \} \subset {\cal S}_{\text{opt}}$}. This is the same choice considered in~\cite{CPSV:cweno,CPSV:coolweno}.

\subsubsection{$\CWENOZ$} \label{ssec:cwenoz}
$\CWENOZ$ was originally proposed in~\cite{CPSV:coolweno} and, recently, it was extensively analyzed in~\cite{CSV19:cwenoz} in a multi-dimensional setting. We recall here its definition.

\begin{definition}[$\CWENOZ$ reconstruction] \label{def:CWENOZ}
	Given a stencil $\Sopt$ \revision{of $G+1$ cells} that includes $\Omega_0$, let $\Popt\in\Poly{G}$ \emph{(optimal polynomial)} be the polynomial of degree $G$ which interpolates all the given data in $\Sopt$. Further, let $ {P}_1, {P}_2, \ldots, {P}_m$ be a set of $m\geq 1$ polynomials of degree $g$ with $g<G$ \revision{such that, for $k=1,\dots,m$, $P_k$ interpolates the cell averages of a sub-stencil $\mathcal{S}_k$, chosen such that} \revision{$\Omega_0\in\Sk\subset\Sopt$}.
	Let also $\{d_k\}_{k=0}^m$ be a set of strictly positive real coefficients such that $\sum_{k=0}^m d_k=1$.
	
	The $\CWENOZ$ operator 
	\[ 
		\Prec 
		= \CWENOZ ({\Popt};{P}_1,\ldots,{P}_m) \in \Poly{G} 
	\]
	is defined as the \CWENO\ operator of Definition~\ref{def:CWENO}, but replacing the definition of the \minor{nonlinear} coefficients \eqref{eq:Omega} with $\{\omega_k\}_{k=0}^m$defined as
	\begin{equation} \label{eq:OmegaZ}
		\alpha_k = {d_k} \left( 1 + \left( \frac {\tau} {I_k+\epsilon} \right)^{\ell} \right),
		\qquad
		\revision{\omega_k = \frac{\alpha_k}{\sum_{i=0}^{m}\alpha_i}},
	\end{equation}
	where $\epsilon$ is a small positive quantity, $\ell \ge 1$
	and $\tau$ is a global smoothness indicator. For efficiency, $\tau$ is restricted to be a linear combination of the other smoothness indicators $I_0=I[\Popt],\ldots,I_m=I[P_m]$, i.e. 
	\begin{equation}  \label{eq:tau}
		\tau := \left| \sum_{k=0}^m \lambda_k I_k \right|
	\end{equation}
for some choice of real coefficients $\lambda_0,\ldots,\lambda_m$.
\end{definition}

In~\cite{CSV19:cwenoz}, it is argued the coefficients should satisfy $\sum_{k=0}^{m}\lambda_k=0$ and it is also shown how to optimize their choice.

We point out that the $\CWENOZ$ reconstruction is a $\CWENO$ based reconstruction and therefore it shares the advantages of $\CWENO$ discussed in Section~\ref{ssec:cweno}. However, the $\CWENOZ$ differs from the classical definition of the $\CWENO$ scheme in the computation of the \minor{nonlinear} coefficients~\eqref{eq:OmegaZ}: the $\CWENOZ $ method uses the idea of Borges, Carmona, Costa and Don in~\cite{BCCD:2008:wenoz5}, where they introduced in a $\WENO$ setting definition \eqref{eq:OmegaZ} of the \minor{nonlinear} coefficients, which drives them closer to their optimal values in the smooth case.

The definition of the new \minor{nonlinear} weights guarantees
a weaker condition on $\hat{m}$ to be satisfied in order to reach optimal accuracy,  which implies the possibility to employ a smaller $\epsilon$ than in the corresponding \CWENO\ reconstruction, enhancing their non-oscillatory properties.  In~\cite{CSV19:cwenoz} a thorough analysis of the global smoothness indicator is performed and the optimal definition of the \minor{nonlinear} weights in multi-dimensional $\CWENOZ$ reconstructions is given. In particular, for conditions on $\ell$ and $\hat{m}$ we refer to~\cite[Theorem 24]{CSV19:cwenoz}.

\subsubsection{$\WENOAO$} \label{ssec:wenoAO}

Adaptive order reconstructions based on the $\CWENO$ procedure have been considered in recent years. We focus on the $\WENOAO$ schemes and the formulations given in~\cite{BGS:wao} and~\cite{AHZ18:wao}, 
recasting them as hierarchic $\CWENOZ$ 
reconstructions. In particular, 
we focus on
the $\WENOAO$ scheme of order $7$ as given in~\cite{BGS:wao} since it will be later used for comparisons with our $7$-th order adaptive reconstruction introduced in Section~\ref{ssec:examples}. 

Let $\Sopt$ be a stencil of seven cells centered on the computational cell $\Omega_0$. The $\WENOAO(7,5,3)$ reconstruction may be described as a \minor{nonlinear} blending of $\CWENOZ$ 
reconstructions
and, using the notation of this work, it can be thus formulated as
\begin{equation} \label{eq:wao753}
\begin{aligned}
	\Prec(x) &= 
	\CWENOZ\Big(
	    \Prec^{(7)};
	    \Prec^{(5)})
	  \Big), 
	  \quad\text{where}\\
	\Prec^{(7)} &= \CWENOZ(P^{(7)};P^{(3)}_L,P^{(3)}_C,P^{(3)}_R), \\
	\Prec^{(5)} &= \CWENOZ(P^{(5)};P^{(3)}_L,P^{(3)}_C,P^{(3)}_R)
\end{aligned}
\end{equation}
and
$P^{(7)}$ is the $6$-th degree polynomial interpolating the data in $\Sopt$, $P^{(5)}$ is the $4$-th degree polynomial interpolating the data 
in $\{\Omega_{-2},\ldots,\Omega_{2}\}$
and, finally, $P^{(3)}_L$, $P^{(3)}_C$, $P^{(3)}_R$ are the parabolas interpolating the data of the substencils
$\{\Omega_{k-3},\Omega_{k-2},\Omega_{k-1}\}$ for $k=1,2,3$, respectively.

As consequence of definition~\eqref{eq:wao753}, we have that the reconstruction polynomial $\Prec$ has the classical property of being globally defined on the computational cell $\Omega_0$ and being there uniformly accurate. However, it is clear that in order to compute $\Prec$ we need to proceed through three reconstruction steps. First, two inner $\CWENOZ$ procedures are applied in order to define two reconstruction polynomials of order $7$ and $5$ respectively.
Finally, $\Prec$ is computed with a $\CWENOZ$ operator having inputs $\Prec^{(7)}$ and $\Prec^{(5)}$. For a detailed description of the linear and \minor{nonlinear} weights employed in the $\WENOAO$ reconstruction we refer to~\cite{BGS:wao}. This procedure requires the computation of three sets \minor{nonlinear} weights and two intermediate polynomials, which
makes this approach computationally expensive, especially in view of generalizations to higher orders and thus deeper hierarchies.

A more general $\WENOAO$ reconstruction has been proposed in~\cite{AHZ18:wao} which has no base levels and, in addition, the definition has been slightly modified leading to a gain in the computational cost. In fact, the corresponding scheme of order $7$ requires the computation of only two sets of \minor{nonlinear} weights
instead of three.

\revision{Further, we recall that $\WENOAO$ reconstructions have been recently designed for unstructured meshes~\cite{BGFB:2020} and for $\mathrm{P_NP_M}$ schemes~\cite{BB:2019}.}

In the numerical tests of this paper we will refer to
the $\WENOAO(7,5,3)$ reconstruction of~\cite{AHZ18:wao}  with \WAOAHZ\ 
and with \WAOBS\ to the 
one
of~\cite{BGS:wao}, but in its finite volume formulation as given in~\cite{AHZ18:wao}.

\section{New adaptive order $\CWENOZ$ reconstruction} \label{sec:AOreconstruction}

Both $\WENOAO$ reconstructions \cite{BGS:wao,AHZ18:wao} share the following feature: more than one set of \minor{nonlinear} weights must be computed for each computational cell.
Our goal is
to introduce an adaptive order reconstruction in 
which 
multiple computation of \minor{nonlinear} weights are not needed, with the aim of saving computational time. 

\subsection{Definition and accuracy analysis} \label{ssec:cwenozAO}

The definition of the new method relies on the following generalization of $\CWENOZ$
and as such it belongs to the class of $\CWENO$
reconstructions.
The hierarchy of nested $\CWENOZ$ operators of \cite{BGS:wao,AHZ18:wao} is replaced by a hierarchy in the size of the linear weights, generalizing the approach of \cite{KolbSemplice:boundary}.

\begin{definition}[$\CWENOZAO$ reconstruction.] \label{def:CWENOZAO}
	Given a stencil $\Sopt$ \revision{of $G+1$ cells} that includes $\Omega_0$, let $\Popt\in\Poly{G}$ (\emph{optimal polynomial}) be the polynomial of degree $G$ associated to $\Sopt$. Let $ {P}_1, {P}_2, \ldots, {P}_m$ be a set of $m\geq 1$ polynomials  of degree $g$ with $\nicefrac{G}{2} \leq g<G$ \revision{such that, for $k=1,\dots,m$, $P_k$ is associated to a substencil $\Sk$ with}
	\revision{$\Omega_0\in\Sk\subset\Sopt$}.
	Further, let $ {Q}_1, {Q}_2, \ldots, {Q}_n$ be a set of $n\geq 1$ polynomials of degree $\gamma_k$ with $\gamma_k<\nicefrac{G}{2}$, for $k=1,\dots,n$, \revision{and such that, for $k=1,\dots,n$, $Q_k$ is associated to a substencil $\SQk$ with} \revision{$\Omega_0\in\SQk\subset\Sopt$}.
	
	The adaptive order $\CWENOZAO$ reconstruction computes a reconstruction polynomial by means of a $\CWENOZ$ operator
	\[  
	\begin{aligned}
	\Prec &= 
		\CWENOZAO (
		{\Popt}
		\,;\;
		{P}_1,\ldots,{P}_m
		\,;\;
		{Q}_1,\ldots,{Q}_n
		) 
		\\
		&=
		\CWENOZ (
		{\Popt}
		\,;\;
		{P}_1,\ldots,{P}_m,
		{Q}_1,\ldots,{Q}_n
		)
	\end{aligned}
	\]
	where the linear coefficients for the polynomials $Q_1,\ldots,Q_n$ are infinitesimal.\\
	In particular we consider $\delta_k=\DX^{r_k}$ for some $r_k>0$,
	for $k=1,\ldots,n$,
	and a set of strictly positive real coefficients $\{d_k\}_{k=0}^m$ 
	such that 
	$\sum_{k=0}^m d_k + \sum_{k=1}^n \delta_k=1$.
	The reconstruction is computed as follows:
	\begin{equation} \label{eq:precCWZAO}
		\Prec(x) 
		= \sum\limits_{k=0}^{m} \omega^P_k 
		P_k(x) + \sum\limits_{k=1}^{n} \omega^Q_k 
		Q_k(x) \in\Poly{G},
	\end{equation}
	where $\omega^P_k$ and $\omega^Q_k$ are the \minor{nonlinear} weights computed as in \minor{equation~\eqref{eq:OmegaZ}} 
	for the polynomial $P_k$ and 
	for the polynomial $Q_k$, respectively. \revision{We employ a different notation for the two subsets of nonlinear weights only for convenience in the proofs. However, the only difference between $\omega^P_k$ and $\omega^Q_k$ is the fact that $\omega^P_k$ results from~\eqref{eq:OmegaZ} with a linear weight $d_k = \Ogrande(1)$, while $\omega^Q_k$ results from~\eqref{eq:OmegaZ} but has an infinitesimal linear weight $\delta_k = \Opiccolo(1)$.}
	
	The additional polynomial ${P}_0$ is defined as
		\begin{equation} \label{eq:p0AO}
		{P}_0(x) = \frac{1}{d_0}\left({\Popt}(x)-\sum_{k=1}^{m}d_k {P}_k(x) -\sum_{k=1}^{n} \delta_k {Q}_k(x)\right) \in \Poly{G}.
		\end{equation}
		The global smoothness indicator $\tau$ is given as a linear combination of $I_0,\ldots,I_m^P$:
		\begin{equation} \label{eq:tauAO}
		\tau := \left| \sum_{k=0}^m \lambda_k I_k^P \right|
		\end{equation}
		for some choice of coefficients $\lambda_0,\dots,\lambda_m$ such that $\sum_{k=0}^m \lambda_k = 0$.
\end{definition}

We point out that the role of $\tau$ is to detect smoothness in the large stencil of $\Popt$. In view of this we did not find useful to include any of the indicators $I^Q_k$ in its definition.

Notice that in Definition~\ref{def:CWENOZAO} we already fix the relation between $G$ and $g$ as $G \geq 2g$. In addition, and compared to the $\CWENOZ$ reconstruction, the new procedure allows the use of polynomials $Q_k$ having degree $\gamma_k < g$. Classical schemes are characterized by the fact that for very large $G$, the stencil of the polynomials of degree $g$ are still quite large and it may be difficult to avoid discontinuities, especially in multi-dimension. Therefore, using lower degree polynomials helps to select smooth stencils but it may influence the optimal accuracy of the method on smooth data. However the fact that the linear weights $\delta_k$ associated to the low order polynomials $Q_k$ are $\delta_k = \Opiccolo(1)$, allows to reach accuracy on smooth data without relying on hierarchic \minor{nonlinear} blending of $\CWENO$ operators, which requires multiple computations of the \minor{nonlinear} weights as in the WENO-AO and similar reconstructions \cite{BGS:wao,AHZ18:wao}.

\subsubsection{Accuracy analysis on smooth solutions} \label{sssec:analysisSmooth}

We point out that the polynomial $P_0$ appearing in 
Definition~\ref{def:CWENOZAO} has degree $G$, but its accuracy may be degraded down to $\min_k\gamma_k<g$.
However, choosing $r_k\geq g-\gamma_k$, 
ensures that the accuracy of $P_0$ is unaffected by the presence of the polynomials $Q_1\ldots,Q_m$ in Definition~\ref{def:CWENOZAO}, as detailed in the following result.

\begin{lemma}\label{lem:rkg}
	Let $\Popt$, $P_k$, $k=1,\dots,m$, and $Q_k$, $k=1,\dots,n$, as in Definition~\ref{def:CWENOZAO}. Let $u$ be a sufficiently smooth function. The polynomial $P_0$ in equation~\eqref{eq:p0AO} is of degree $G$, but its accuracy order in the computational cell $\Omega_0$ is $g$, provided that $r_k \geq g-\gamma_k$.
\end{lemma}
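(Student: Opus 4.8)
The plan is to read off the pointwise error $P_0(x)-u(x)$ on $\Omega_0$ directly from the defining identity~\eqref{eq:p0AO} and to compare the orders of the individual contributions. The claim that $P_0\in\Poly{G}$ is immediate, since $P_0$ is a linear combination of $\Popt\in\Poly{G}$ and of $P_k,Q_k$ whose degrees $g,\gamma_k$ are all $\le G$; so the substance of the statement lies entirely in the accuracy estimate. First I would subtract $u$ from every interpolant in~\eqref{eq:p0AO}, writing $\Popt=u+(\Popt-u)$, $P_k=u+(P_k-u)$, $Q_k=u+(Q_k-u)$, and collect the terms proportional to $u$. Their total coefficient is $1-\sum_{k=1}^m d_k-\sum_{k=1}^n\delta_k$, which equals $d_0$ thanks to the normalization $\sum_{k=0}^m d_k+\sum_{k=1}^n\delta_k=1$ imposed in Definition~\ref{def:CWENOZAO}. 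Hence the $u$-terms cancel against the prefactor $1/d_0$, leaving
\[
P_0(x)-u(x)=\frac{1}{d_0}\left[(\Popt(x)-u(x))-\sum_{k=1}^m d_k\,(P_k(x)-u(x))-\sum_{k=1}^n \delta_k\,(Q_k(x)-u(x))\right].
\]

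Next I would insert the approximation orders recalled in Section~\ref{ssec:cweno}, namely $\Popt(x)-u(x)=\Ogrande(\DX^{G+1})$ and $P_k(x)-u(x)=\Ogrande(\DX^{g+1})$ for $x\in\Omega_0$, together with the analogous estimate $Q_k(x)-u(x)=\Ogrande(\DX^{\gamma_k+1})$ for the low-degree interpolants. Since $d_k=\Ogrande(1)$ and $\delta_k=\DX^{r_k}$, the three groups contribute $\Ogrande(\DX^{G+1})$, $\Ogrande(\DX^{g+1})$ and $\Ogrande(\DX^{r_k+\gamma_k+1})$ respectively, while $d_0$ is a fixed positive constant bounded away from zero (the $\delta_k\to0$ under refinement and the $d_k$ sum to less than one). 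Because $g<G$, the optimal-polynomial term is dominated by the degree-$g$ term, so the only contributions that could in principle lower the order below $g+1$ are those coming from the $Q_k$.

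The decisive step is therefore the estimate of the $Q_k$ terms, and this is exactly where the hypothesis enters: $r_k\ge g-\gamma_k$ gives $r_k+\gamma_k+1\ge g+1$, so each $\delta_k\,(Q_k-u)=\Ogrande(\DX^{r_k+\gamma_k+1})$ is of order at least $g+1$ and does not degrade the leading behaviour. Collecting the estimates yields $P_0(x)-u(x)=\Ogrande(\DX^{g+1})$ on $\Omega_0$, i.e. $P_0$ attains the accuracy of a degree-$g$ polynomial despite being of degree $G$. The same computation run without the hypothesis leaves the term $\Ogrande(\DX^{r_k+\gamma_k+1})$ with $r_k+\gamma_k<g$, which accounts for the degradation down to $\min_k\gamma_k$ mentioned just before the statement. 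I do not expect a genuine obstacle here: the only point requiring care is the exact cancellation of the $\Ogrande(1)$ contribution of $u$, which hinges entirely on the partition-of-unity normalization of the linear weights, whereas the rest is a matter of comparing exponents.
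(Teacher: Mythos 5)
Your proof is correct and follows essentially the same route as the paper's: the same rewriting of $d_0u$ via the normalization $\sum_{k=0}^m d_k+\sum_{k=1}^n\delta_k=1$ to cancel the $\Ogrande(1)$ contribution of $u$, followed by the same comparison of exponents using $\delta_k=\DX^{r_k}$ and $r_k\ge g-\gamma_k$. If anything you are slightly more careful than the printed proof, whose final line states the bound as $\Ogrande(\DX^{g})$ whereas your $\Ogrande(\DX^{g+1})$ is the exponent actually consistent with how the lemma is invoked in the proof of Lemma~\ref{lem:suffCond}.
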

\begin{proof}
	After the definition~\eqref{eq:p0AO} of $P_0$, for each $x$ in the computational cell $\Omega_0$ we have
	\begin{align*}
		P_0(x) - u(x) &= \frac{1}{d_0} \left[ \Popt(x) - \sum_{k=1}^m d_k P_k(x) - \sum_{k=1}^n \delta_k Q_k(x) - d_0 u(x)  \right] \\
		&= \frac{1}{d_0} \left[ \Popt(x) - \sum_{k=1}^m d_k P_k(x) - \sum_{k=1}^n \delta_k Q_k(x) - \left( 1 - \sum_{k=1}^m d_k - \sum_{k=1}^n \delta_k \right) u(x) \right] \\
		&= \frac{1}{d_0} \underbrace{\left(\Popt(x) - u(x)\right)}_{\Ogrande(\Delta x^{G+1})} + \frac{1}{d_0} \sum_{k=1}^n d_k \underbrace{\left(u(x) - P_k(x)\right)}_{\Ogrande(\Delta x^{g+1})}
		+ \frac{1}{d_0}
			\sum_{k=1}^m \delta_k \underbrace{\left(u(x) - Q_k(x)\right)}_{\Ogrande(\Delta x^{\gamma_k+1})}.
	\end{align*}
	Therefore, $P_0(x) - u(x) = \Ogrande(\Delta x^{g})$ if $\delta_k = \Ogrande(\Delta x^{r_k})$ with $r_k \geq g-\gamma_k$. \qed
\end{proof}

We can now state sufficient conditions to ensure that, for smooth data,  the reconstruction error at each point in the computational cell of the new $\CWENOZAO$ scheme is not bigger than the interpolation error of $\Popt$.

\begin{lemma} \label{lem:suffCond}
	Let $\Prec$ be a polynomial as in equation~\eqref{eq:precCWZAO} in Definition~\eqref{def:CWENOZAO}. Let be $u$ a sufficiently smooth function. Then, the approximation error of $\Prec$ in the computational cell $\Omega_0$ is of order $G+1$ under the restriction $\gamma_k+r_k\geq g$ provided that both $\frac{w_k^P-d_k}{d_k} = \Ogrande(\Delta x^{G-g})$ and $\frac{w_k^Q-\delta_k}{\delta_k} = \Ogrande(\Delta x^{G-\gamma_k-r_k})$ hold true.
\end{lemma}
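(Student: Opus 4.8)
The plan is to adapt the classical $\CWENO$ accuracy argument (as in \cite[Proposition 1]{CPSV:cweno}) to the presence of the infinitesimal-weight polynomials $Q_k$. I would start from \eqref{eq:precCWZAO} and use the normalization of the nonlinear weights, $\sum_{k=0}^m \omega^P_k + \sum_{k=1}^n \omega^Q_k = 1$, to rewrite the pointwise error on $\Omega_0$ as a weighted combination of the individual approximation errors,
\[
\Prec(x) - u(x) = \sum_{k=0}^m \omega^P_k \bigl(P_k(x) - u(x)\bigr) + \sum_{k=1}^n \omega^Q_k \bigl(Q_k(x) - u(x)\bigr).
\]
The decisive algebraic step is then to split each nonlinear weight into its linear part plus a deviation, $\omega^P_k = d_k + (\omega^P_k - d_k)$ and $\omega^Q_k = \delta_k + (\omega^Q_k - \delta_k)$, which separates the error into a \emph{consistency} contribution carried by the linear weights and a \emph{deviation} contribution carried by the differences $\omega^P_k-d_k$ and $\omega^Q_k-\delta_k$.

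For the consistency contribution I would invoke the defining relation \eqref{eq:p0AO} for $P_0$, which after multiplication by $d_0$ reads $\Popt = \sum_{k=0}^m d_k P_k + \sum_{k=1}^n \delta_k Q_k$, together with $\sum_{k=0}^m d_k + \sum_{k=1}^n \delta_k = 1$. These two identities make the linear-weight part collapse exactly to $\Popt(x) - u(x) = \Ogrande(\Delta x^{G+1})$, with no smallness of the weights required; this already realizes the target order.

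The remaining work is to show that the deviation contribution is also $\Ogrande(\Delta x^{G+1})$, and here I would treat the two families separately. For the $P_k$ terms, the hypothesis $\tfrac{\omega^P_k-d_k}{d_k}=\Ogrande(\Delta x^{G-g})$ together with $d_k=\Ogrande(1)$ gives $\omega^P_k-d_k=\Ogrande(\Delta x^{G-g})$, which multiplied by $P_k-u=\Ogrande(\Delta x^{g+1})$ yields $\Ogrande(\Delta x^{G+1})$; the estimate $P_k-u=\Ogrande(\Delta x^{g+1})$ holds directly for $k\ge1$ and, for $k=0$, is precisely the content of Lemma~\ref{lem:rkg} under the restriction $\gamma_k+r_k\ge g$. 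For the $Q_k$ terms, I would combine $\tfrac{\omega^Q_k-\delta_k}{\delta_k}=\Ogrande(\Delta x^{G-\gamma_k-r_k})$ with $\delta_k=\Ogrande(\Delta x^{r_k})$ to obtain the absolute bound $\omega^Q_k-\delta_k=\Ogrande(\Delta x^{G-\gamma_k})$, and multiply by $Q_k-u=\Ogrande(\Delta x^{\gamma_k+1})$ to land again on $\Ogrande(\Delta x^{G+1})$. Summing the two families and adding the consistency term gives the claim.

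The main obstacle, and the place where the hypotheses are genuinely consumed, is the $k=0$ deviation term $(\omega^P_0-d_0)(P_0-u)$: since $P_0$ is built as a degree-$G$ polynomial but its accuracy can be spoiled by the low-degree $Q_k$, one must know that it still approximates $u$ to order $g+1$, and this is exactly where the restriction $\gamma_k+r_k\ge g$ enters through Lemma~\ref{lem:rkg}. A secondary subtlety worth checking explicitly is the bookkeeping of the infinitesimal linear weights $\delta_k=\Ogrande(\Delta x^{r_k})$, which play a double role---boosting the accuracy of $P_0$ and converting the assumed relative deviation $\Ogrande(\Delta x^{G-\gamma_k-r_k})$ into the absolute $\Ogrande(\Delta x^{G-\gamma_k})$---so that every product in the deviation sum lands at precisely order $G+1$ and none degrades below it.
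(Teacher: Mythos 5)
Your proposal is correct and follows essentially the same route as the paper: the identical decomposition $u-\Prec = (u-\Popt) + \sum_k (d_k-\omega^P_k)(P_k-u) + \sum_k (\delta_k-\omega^Q_k)(Q_k-u)$ obtained by splitting each nonlinear weight into linear part plus deviation, converting the relative bounds into absolute ones via $d_k=\Ogrande(1)$ and $\delta_k=\Ogrande(\Delta x^{r_k})$, and invoking Lemma~\ref{lem:rkg} for the accuracy of $P_0$. If anything, you are slightly more explicit than the paper about where the restriction $\gamma_k+r_k\geq g$ is consumed (the $k=0$ deviation term), which is a welcome clarification rather than a deviation.
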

\begin{proof}
The approximation errors of the polynomials appearing in Definition~\ref{def:CWENOZAO} are
$$
|\Popt(x) - u(x)| = \Ogrande(\Delta x^{G+1}), \quad |P_k(x) - u(x)| = \Ogrande(\Delta x^{g+1}), \quad |Q_k(x) - u(x)| = \Ogrande(\Delta x^{\gamma_k+1})
$$
at any point $x$ in the computational cell. Then the reconstruction error at $x$
\begin{align*}
u(x) - \Prec(x) = & \underbrace{(u(x)-\Popt(x))}_{O(\Delta x^{G+1})} + (d_0 - w^P_0) \underbrace{(P_0(x) - u(x))}_{O(\Delta x^{g+1})} \\
& + \sum_{k=1}^m (d_k-w_k^P) \underbrace{(P_k(x)-u(x))}_{O(\Delta x^{g+1})} + \sum_{k=1}^n (\delta_k-w_k^Q) \underbrace{(Q_k(x)-u(x))}_{O(\Delta x^{\gamma_k+1})}
\end{align*}
is of optimal order $G+1$ if $w^P_k-d_k=\Ogrande(\DX^{G-g})$ and $w^Q_k-\delta_k=\Ogrande(\DX^{G-\gamma_k})$. The first is equivalent to $\frac{w_k^P-d_k}{d_k} = O(\Delta x^{G-g})$ since $d_k=\Ogrande(1)$. The latter is implied by 
$\frac{w_k^Q-\delta_k}{\delta_k} = \Ogrande(\Delta x^{G-\gamma_k-r_k})$.
\end{proof}

The sufficient conditions of the previous result clarify the roles played by the nonlinear weights and by the linear weights in ensuring the optimal convergence rates on smooth data. The computation of the nonlinear weights should ensure that, for all polynomials, they are close to their linear counterpart in a relative sense, as $\Ogrande(\DX^{G-g})$, independently of $\gamma_k$. On the other hand, the linear weights $\delta_k=\DX^{r_k}$ make up for the difference $g-\gamma_k$.

For the analysis, let us assume that the cell averages to which the reconstruction is applied are sampled exactly from a sufficiently smooth function $u(x)$ that may have a critical point of order $n_{cp}\geq0$ at a point $x_0\in\Omega_0$. Obviously $n_{cp}=0$ means that there is no critical point.

Let us first prove a very general feature of the smoothness indicators.
\begin{proposition} \label{th:propI}
	Let $\mathcal{S}$ be a stencil including $\Omega_0$ and let $q(x)$ be a polynomial approximating a regular function $u(x)$. Then
	$$
	I[q] = \Ogrande(\DX^{2 n_{cp} + 2}).
	$$
\end{proposition}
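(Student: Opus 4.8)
The plan is to read off the size of each term in the Jiang--Shu indicator from the size of the derivatives $q^{(i)}$ on the small cell $\Omega_0$. The starting observation is that a critical point of order $n_{cp}$ at $x_0$ forces the low-order derivatives of $u$ to be small there: since $u'(x_0)=\dots=u^{(n_{cp})}(x_0)=0$ and $u^{(n_{cp}+1)}(x_0)\neq0$, a Taylor expansion about $x_0$ gives $u^{(i)}(x)=\Ogrande(\DX^{\,n_{cp}+1-i})$ for every $x\in\Omega_0$ and every $1\le i\le n_{cp}+1$, while $u^{(i)}(x)=\Ogrande(1)$ for $i>n_{cp}+1$. Compactly, $u^{(i)}(x)=\Ogrande(\DX^{\max(0,\,n_{cp}+1-i)})$ on $\Omega_0$. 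First I would record these bounds, since everything reduces to transferring them to $q$.

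Next I would transfer the bounds from $u$ to $q$. Writing $d=\deg q$, the interpolation error satisfies $q^{(i)}(x)-u^{(i)}(x)=\Ogrande(\DX^{\,d+1-i}\,\sup|u^{(d+1)}|)$ on $\Omega_0$; the point I would stress is that near the critical point the factor $\sup|u^{(d+1)}|$ is itself small, namely $\Ogrande(\DX^{\max(0,\,n_{cp}-d)})$, so the error never degrades the order dictated by $u$ itself. Combining with the bounds on $u^{(i)}$ yields the key estimate $q^{(i)}(x)=\Ogrande(\DX^{\max(0,\,n_{cp}+1-i)})$ on $\Omega_0$, valid for all $1\le i\le d$ and independently of whether $d$ exceeds $n_{cp}$ or not. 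An equivalent, perhaps cleaner, route is to apply the bound to the deviation $v=u-u(x_0)$, which vanishes to order $n_{cp}+1$ at $x_0$: after the rescaling $x=\DX\,\xi$ the cells sit at $\DX$-independent nodes and the (linear, $\DX$-independent) map from cell averages to coefficients shows that the coefficient of $x^i$ in $q$ is $\Ogrande(\DX^{\,n_{cp}+1-i})$, from which the same derivative bound follows.

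With these bounds in hand the estimate is mechanical. Using $\int_{\Omega_0}\dx=\DX$, the $i$-th summand of the indicator obeys
\[
\DX^{2i-1}\int_{\Omega_0}\bigl(q^{(i)}(x)\bigr)^2\dx
=\Ogrande\!\bigl(\DX^{\,2i-1}\cdot\DX\cdot\DX^{\,2\max(0,\,n_{cp}+1-i)}\bigr)
=\Ogrande\!\bigl(\DX^{\,2i+2\max(0,\,n_{cp}+1-i)}\bigr).
\]
For $1\le i\le n_{cp}+1$ the exponent collapses to $2i+2(n_{cp}+1-i)=2n_{cp}+2$, so each such term is exactly $\Ogrande(\DX^{2n_{cp}+2})$, whereas for $i>n_{cp}+1$ the exponent is $2i>2n_{cp}+2$ and the term is of strictly higher order. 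Summing the finitely many contributions gives $I[q]=\Ogrande(\DX^{2n_{cp}+2})$; the bound is uniform in $\deg q$ precisely because the high-derivative terms are subdominant.

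The delicate step is the transfer in the second paragraph. The naive interpolation bound $q^{(i)}-u^{(i)}=\Ogrande(\DX^{\,d+1-i})$ would, when $d<n_{cp}$, predict $q'=\Ogrande(\DX^{d})$ and hence an indicator of only $\Ogrande(\DX^{2d+2})$, contradicting the claim. The resolution, and the crux of the argument, is that the error constant is governed by $u^{(d+1)}$, which near a critical point of order $n_{cp}>d$ is itself $\Ogrande(\DX^{\,n_{cp}-d})$; accounting for this restores the sharp order. I would therefore phrase the interpolation estimate with the explicit $\sup|u^{(d+1)}|$ factor, or argue via the rescaled deviation $v=u-u(x_0)$, so that the gain from the critical point is not silently discarded.
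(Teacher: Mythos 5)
Your proof is correct, but it takes a genuinely different route from the paper's. The paper exploits the invariance of $I$ under constant shifts of the data to assume $u(x_0)=0$, observes that every cell average in the stencil then expands as $\ca{u}_\alpha=\sum_{k\geq n_{cp}+1}A_k^\alpha u_0^{(k)}\DX^k=\Ogrande(\DX^{n_{cp}+1})$, and invokes the quadratic dependence of the Jiang--Shu indicator on the data (Proposition 13 of \cite{CSV19:cwenoz}), writing $I[q]=\sum C_{\alpha\beta}\,\ca{u}_\alpha\ca{u}_\beta$ with bounded $C_{\alpha\beta}$, so that each product contributes $\Ogrande(\DX^{2n_{cp}+2})$. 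You instead work at the level of pointwise derivative bounds: you control $u^{(i)}$ on $\Omega_0$ from the Taylor expansion, transfer the bounds to $q^{(i)}$ through the interpolation error, and integrate the indicator term by term. Your version is self-contained (it does not need the quadratic-form structure of $I$) and it correctly isolates the one delicate point, namely that when $\deg q<n_{cp}$ the naive error bound $\Ogrande(\DX^{\deg q+1-i})$ is too weak and one must use the smallness of $u^{(\deg q+1)}$ near the critical point to recover the sharp exponent; that subtlety is real and you handle it properly. What the paper's device buys is brevity: the shift plus quadratic form makes the degree of $q$, and any comparison between $\deg q$ and $n_{cp}$, entirely irrelevant. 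Your alternative route via the deviation $v=u-u(x_0)$ and the rescaling $x=\DX\,\xi$ is in fact essentially the paper's proof in coefficient form, since the $\DX$-independence of the map from rescaled averages to polynomial coefficients is exactly what makes the $C_{\alpha\beta}$ bounded.
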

\begin{proof}
	Since the regularity indicators are invariant if data are shifted by an additive constant, without loss of generality we assume that $u(x_0)=0$.
	In the following, we will consider the Taylor expansion of $u(x)$ around $x_0$ and denote for simplicity $u^{(k)}_0=u^{(k)}(x_0)$.
	The cell average of $u(x)$ in $\Omega_\alpha$ will have an expansion of the form
	$$
	\ca{u}_\alpha = \sum_{k \geq n_{cp}+1} A_k^\alpha u_0^{(k)} \DX^k
	$$
	for some constants $A_k^\alpha\in\R$.
	The regularity indicator is quadratic with respect to the data (see~\cite[Proposition 13]{CSV19:cwenoz}), and thus there exists $C_{\alpha\beta}\in\R$ such that
	\begin{align*}
	I[q] &= \revision{ \sum_{ \{ \alpha,\beta : \, \Omega_\alpha,\Omega_\beta \in \mathcal{S} \} } } C_{\alpha\beta} \ca{u}_\alpha \ca{u}_\beta \\
	&=  \revision{ \sum_{ \{ \alpha,\beta : \, \Omega_\alpha,\Omega_\beta \in \mathcal{S} \} } } C_{\alpha\beta} 
	\left( \sum_{k \geq n_{cp}+1} A_k^\alpha u_0^{(k)} \DX^k \right) 
	\left( \sum_{j \geq n_{cp}+1} A_j^\beta u_0^{(j)} \DX^j \right) \\
	&= \Ogrande(\DX^{2 n_{cp} + 2}).
	\end{align*}
\end{proof}

Next, we recall some results proven in 
~\cite[Proposition 14, Corollary 16 and Corollary 22]{CSV19:cwenoz}.
\begin{proposition} \label{th:proposition}
	Let $\mathcal{S}$ be a stencil including $\Omega_0$ and let $q(x)$ be a polynomial approximating a regular function $u(x)$ with accuracy greater or equal than $M$, then
	$$
		I[q] = B_M + R[q]
	$$
	where $B_M$ depends on $M$ but not on $q(x)$, while $R[q]$ depends on the stencil $\mathcal{S}$ and $R[q]=\Opiccolo(B_M)$. Moreover, if $u$ has a critical point with $n_{cp} \geq M$, then $B_M = 0$ and $R[q] = \Ogrande(\Delta x^{2M+2})$. In the case $n_{cp} < M$, then $B_M = \Ogrande(\Delta x^{2(n_{cp}+1)})$ and $R[q] = \Ogrande(\Delta x^{M+2+n_{cp}})$, so that $R[q]=\Opiccolo(B_M)$. Further, thanks to the hypothesis $\sum_{k=0}^m \lambda_k = 0$, $\tau = \Ogrande(\sum_{k=0}^m R_k)$.
\end{proposition}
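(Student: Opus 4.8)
The plan is to reduce everything to the Taylor expansion of $u$ about $x_0$ and to exploit the quadratic dependence of $I$ on the data already used in the proof of Proposition~\ref{th:propI}. First I would set $u(x_0)=0$ (admissible by the shift-invariance of $I$) and write $q=u+e$, where the error $e=q-u$ satisfies $|e(x)|=\Ogrande(\DX^{M+1})$ on $\Omega_0$ because $q$ has accuracy at least $M$; the corresponding bound on derivatives, $e^{(i)}=\Ogrande(\DX^{M+1-i})$, follows from the usual approximation estimates for the derivatives of a polynomial reconstruction. Substituting $q^{(i)}=u^{(i)}+e^{(i)}$ into Definition~\ref{def:ind} and expanding the squares splits $I[q]$ into three groups: the purely $u$-dependent group $\sum_i\DX^{2i-1}\int_{\Omega_0}(u^{(i)})^2\dx$, the mixed group $2\sum_i\DX^{2i-1}\int_{\Omega_0}u^{(i)}e^{(i)}\dx$, and the error group $\sum_i\DX^{2i-1}\int_{\Omega_0}(e^{(i)})^2\dx$.

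The decomposition $I[q]=B_M+R[q]$ would be obtained by declaring $B_M$ to be the part of the $u$-dependent group assembled from the leading Taylor data $u_0^{(1)},\dots,u_0^{(M)}$ of $u$ \emph{alone}, and $R[q]$ to be everything else (the mixed and error groups, together with the higher-order tail of the $u$-group). Since $B_M$ is built only from $u$ and from the geometry of $\Omega_0$, it depends on $M$ but not on the particular stencil, whereas $R[q]$, built from $e$, carries all the stencil dependence. The heart of the argument is then the order count: using that on $\Omega_0$ one has $u^{(i)}=\Ogrande(\DX^{\max(0,\,n_{cp}+1-i)})$, I would show that the mixed group is $\Ogrande(\DX^{M+n_{cp}+2})$ and the error group is $\Ogrande(\DX^{2M+2})$, so that $R[q]=\Ogrande(\DX^{\min(M+n_{cp}+2,\,2M+2)})$, while the $u$-group reproduces the $\Ogrande(\DX^{2(n_{cp}+1)})$ leading order already isolated in Proposition~\ref{th:propI}.

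The two regimes then follow by comparing exponents. When $n_{cp}<M$ the first nonvanishing coefficient $u_0^{(n_{cp}+1)}$ lies among $u_0^{(1)},\dots,u_0^{(M)}$, so $B_M=\Ogrande(\DX^{2(n_{cp}+1)})$ is genuinely the leading term, the mixed group dominates giving $R[q]=\Ogrande(\DX^{M+n_{cp}+2})$, and $M>n_{cp}$ yields $R[q]=\Opiccolo(B_M)$. When $n_{cp}\geq M$ all of $u_0^{(1)},\dots,u_0^{(M)}$ vanish, so by construction $B_M=0$ and the surviving leading contribution is the error group, giving $R[q]=\Ogrande(\DX^{2M+2})$. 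I expect the delicate point to be precisely this separation: one must define $B_M$ so that it is simultaneously $q$-independent \emph{and} captures the true leading order when $n_{cp}<M$, while collapsing to $0$ when $n_{cp}\geq M$. Verifying that no stencil-dependent term sneaks in below the claimed order, and that the $u$-group tail is genuinely absorbed into $R[q]$ at the stated order, is the main bookkeeping obstacle.

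Finally, for $\tau$ I would apply the decomposition to each $I_k^P=I[P_k]$ with a common accuracy floor $M$ shared by $\Popt$ and all the $P_k$ (valid since each has accuracy at least that of the lowest-degree candidate). All the $I_k^P$ then carry the \emph{same} $B_M$, so the constraint $\sum_{k=0}^m\lambda_k=0$ makes the large common part cancel:
\[
\tau=\Big|\sum_{k=0}^m\lambda_k I_k^P\Big|=\Big|\,B_M\sum_{k=0}^m\lambda_k+\sum_{k=0}^m\lambda_k R_k\,\Big|=\Big|\sum_{k=0}^m\lambda_k R_k\Big|=\Ogrande\Big(\sum_{k=0}^m R_k\Big),
\]
which is the claimed estimate and makes transparent why the hypothesis $\sum_k\lambda_k=0$ is indispensable: without it one would only get $\tau=\Ogrande(B_M)$, the large quantity the cancellation is designed to remove.
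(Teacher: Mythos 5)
The paper gives no proof of Proposition~\ref{th:proposition}: it is recalled from \cite{CSV19:cwenoz} (Proposition~14, Corollaries~16 and~22), so there is no in-paper argument to compare against. Your proposal is a correct reconstruction along the same lines as that reference: the quadratic dependence of $I$ on the data (already exploited in Proposition~\ref{th:propI}), the splitting $q=u+e$, and the definition of $B_M$ from the Taylor data $u_0^{(1)},\dots,u_0^{(M)}$ on $\Omega_0$ alone are the right ingredients, and your exponent counts --- $\Ogrande(\DX^{M+n_{cp}+2})$ for the mixed group, $\Ogrande(\DX^{2M+2})$ for the error group, $\Ogrande(\DX^{2(n_{cp}+1)})$ for the leading $u$-group --- reproduce the stated orders in both regimes, as does the cancellation of the common $B_M$ in $\tau$ via $\sum_k\lambda_k=0$. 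Two steps deserve more care than you give them, though neither invalidates the argument. First, $e=q-u$ is not a polynomial, so the derivative bounds $e^{(i)}=\Ogrande(\DX^{M+1-i})$ are not a direct inverse-inequality statement; you should compare $q$ with the Taylor polynomial $T$ of $u$ at $x_0$ of degree $\deg q$, apply the Markov-type inequality to the genuine polynomial $q-T$ (which is $\Ogrande(\DX^{M+1})$ on $\Omega_0$ because $\deg q\geq M$), and bound $(T-u)^{(i)}$ by Taylor's theorem. Second, the $\tau$ statement needs $B_M$ to be literally the same number for $\Popt$ (or $P_0$) and for each $P_k$, even though the upper limit of the sum in Definition~\ref{def:ind} is the degree of the polynomial and therefore differs between them; this holds because every product $u_0^{(a)}u_0^{(b)}$ entering the term of index $i$ has $a,b\geq i$, so the terms with $i>M$ contribute nothing to $B_M$ --- a point worth stating explicitly, as is the fact that $P_0$ reaches accuracy $g$ only under the hypotheses of Lemma~\ref{lem:rkg}.
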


\begin{lemma}[Lemma 6 of \cite{DB:2013}] \label{lem:lemma}
	If $\alpha_k^P = d_k\left( 1+A\DX^\beta+\Ogrande(\DX^{\beta+1}) \right)$ for $k=0,\dots,m$, and $\alpha_k^Q = \delta_k\left( 1+A\DX^\beta+\Ogrande(\DX^{\beta+1}) \right)$ for $k=1,\dots,n$ with $\beta > 0$ and $A$ independent on $k$, then $\frac{\omega_k^P}{d_k} = 1 + \Ogrande(\DX^{\beta+1})$ for $k=0,\dots,m$ and $\frac{\omega_k^Q}{\delta_k} = 1 + \Ogrande(\DX^{\beta+1})$ for $k=1,\dots,n$.
\end{lemma}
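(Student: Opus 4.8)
The plan is to exploit the normalization that defines the nonlinear weights, together with the hypothesis that the first-order coefficient $A$ is common to every $\alpha_k^P$ and $\alpha_k^Q$. Recall that, by Definition~\ref{def:CWENOZAO}, each weight is obtained by dividing the corresponding $\alpha$ by the single normalization sum
\[
S := \sum_{k=0}^m \alpha_k^P + \sum_{k=1}^n \alpha_k^Q .
\]
First I would expand $S$ using the two assumptions on the $\alpha$'s. Since $A$ does not depend on $k$, factoring the common $1+A\DX^\beta$ out of every term gives
\[
S = \left( \sum_{k=0}^m d_k + \sum_{k=1}^n \delta_k \right)\left( 1 + A\DX^\beta \right) + \Ogrande(\DX^{\beta+1}),
\]
where the last term collects the finitely many index-dependent remainders, each multiplied by a bounded coefficient $d_k=\Ogrande(1)$ or $\delta_k=\Opiccolo(1)$. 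Invoking the normalization $\sum_{k=0}^m d_k + \sum_{k=1}^n \delta_k = 1$ from Definition~\ref{def:CWENOZAO} then yields $S = 1 + A\DX^\beta + \Ogrande(\DX^{\beta+1})$.

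Next I would form the relative errors directly. For each $k$, setting $N_k := \alpha_k^P/d_k = 1 + A\DX^\beta + \Ogrande(\DX^{\beta+1})$, one has
\[
\frac{\omega_k^P}{d_k} = \frac{\alpha_k^P}{d_k\,S} = \frac{N_k}{S} = 1 + \frac{N_k - S}{S}.
\]
The decisive point is that $N_k$ and $S$ carry the \emph{same} leading terms $1+A\DX^\beta$ --- precisely because $A$ is independent of $k$ --- so these cancel in the difference and $N_k - S = \Ogrande(\DX^{\beta+1})$. Since $S = 1 + \Ogrande(\DX^\beta)$ is bounded away from zero for $\DX$ small, so that $1/S = \Ogrande(1)$, we conclude $\omega_k^P/d_k = 1 + \Ogrande(\DX^{\beta+1})$. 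Repeating the identical computation with $\delta_k$ and $\alpha_k^Q$ in place of $d_k$ and $\alpha_k^P$ gives $\omega_k^Q/\delta_k = 1 + \Ogrande(\DX^{\beta+1})$, which completes the argument.

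The only genuinely delicate step --- and the reason the hypothesis that $A$ be independent of $k$ cannot be dropped --- is the cancellation of the $\Ogrande(\DX^\beta)$ contribution in the relative error. Were $A$ allowed to vary with $k$, the leading deviation of each $\alpha_k$ would no longer match that of the common sum $S$, and one could only guarantee a relative error of size $\Ogrande(\DX^\beta)$ instead of $\Ogrande(\DX^{\beta+1})$; it is exactly this gain of one order that feeds into the sufficient conditions of Lemma~\ref{lem:suffCond}. A secondary bookkeeping concern is that the infinitesimal $\delta_k = \Opiccolo(1)$ might degrade the estimate for $S$, but since the $\delta_k$ remain bounded this causes no difficulty, so no further subtlety arises.
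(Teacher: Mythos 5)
Your proposal is correct and follows essentially the same route as the paper: both compute the normalization sum $S=\sum_k\alpha_k^P+\sum_k\alpha_k^Q=1+A\DX^{\beta}+\Ogrande(\DX^{\beta+1})$ using $\sum_k d_k+\sum_k\delta_k=1$ and then divide, relying on the $k$-independence of $A$ for the leading-order cancellation. If anything, your bookkeeping via $N_k/S=1+(N_k-S)/S$ is marginally tidier than the paper's expansion of $1/S$ as $1-A\DX^{\beta}+\Ogrande(\DX^{\beta+1})$, since it makes the cancellation of the $\Ogrande(\DX^{\beta})$ terms explicit before any products are formed.
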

\begin{proof}
	The proof relies on observing that
	$$
	\sum_{j=0}^m \alpha_j^P + \sum_{j=1}^m \alpha_j^Q = 1 + A \DX^\beta + \Ogrande(\DX^{\beta+1}).
	$$
	Then
	$$
	\omega_k^P = d_k \left( 1 + A \DX^\beta + \Ogrande(\DX^{\beta+1}) \right) \left( 1 - A \DX^\beta + \Ogrande(\DX^{\beta+1}) \right)
	$$
	and similarly for $\omega_k^Q$.
\end{proof}

We are now ready to analyze the behavior of the reconstruction on smooth data. In the following, given a function $q(\DX)$, we will write $\theta(q(\DX)) = r$ to mean that $q(\DX) \sim a_r \DX^r$ for some $a_r\neq0$.

\paragraph{Case (A): \pmb{$n_{cp}\geq g$}}
Applying Proposition~\ref{th:proposition} with $M=g$ we have that $B_g=0$ and thus, defining $R_k=R[P_k]$, we can write
for $k=0,\dots,m$
that
$$
\alpha_k^P = d_k\big(1+(a_k)^\ell\big)
\quad\text{ with }\quad
	a_k = \frac{\tau}{I_k^P + \epsilon} = \frac{\tau}{R_k+\epsilon} \sim \frac{C_\tau \DX^{\theta(\tau)}}{C_k \DX^{\theta(R_k)} + C_\epsilon \DX^{\hat{m}}}.
$$
For Proposition~\ref{th:proposition} we have that both $\theta(\tau)$ and $\theta(R_k)$ are larger than $2g+2$. Then
restricting the choice of $\hat{m}$ to
\begin{equation} \label{eq:mCondition}
\hat{m} \leq 2g+1
\end{equation}
ensures that
$a_k \sim C \Delta x^{\theta(\tau)-\hat{m}} \to 0$ with $C=C_\tau/C_\epsilon$ independent on $k$.
Similarly, for $k=1,\dots,n$, 
$$
\alpha_k^Q = \delta_k\big(1+(b_k)^\ell\big)
\quad\text{ with }\quad
	b_k = \frac{\tau}{I_k^Q + \epsilon}
	\sim \frac{C_\tau \DX^{\theta(\tau)}}{\tilde{C}_k \DX^{\theta(I_k^Q)} + C_\epsilon \DX^{\hat{m}}}
$$
and, using condition~\eqref{eq:mCondition},
$b_k \sim C \DX^{\theta(\tau)-\hat{m}} \to 0$ with $C=C_\tau/C_\epsilon$. This is trivial if $\gamma_k=0$ and otherwise it holds thanks to Proposition~\ref{th:propI}, which ensures $\theta(I^Q_k)\geq2g+2$.

We thus have that
$\alpha_k^P = d_k\big(1+C^\ell\DX^{\ell t}+\Ogrande(\DX^{\ell t+1}) \big)$
for $k=0,\ldots,m$ and 
$\alpha_k^Q = \delta_k\big(1+C^\ell\DX^{\ell t}+\Ogrande(\DX^{\ell t+1}) \big)$
for $k=1,\ldots,n$,
where we have defined $t=\theta(\tau)-\hat{m}$.
We can thus apply Lemma~\ref{lem:lemma} to conclude that 
$\frac{\omega_k^{P} - d_k}{d_k} =\Ogrande(\DX^{\ell t+1})$
and
$	\frac{\omega_k^Q - \delta_k}{\delta_k} =  \Ogrande(\DX^{\ell t+1}).
$
Finally,
the sufficient conditions of Lemma~\ref{lem:suffCond} for optimal convergence order are satisfied if
$$
	\ell(\theta(\tau)-\hat{m})+1 \geq G - g \quad \mbox{ and } \quad \ell(\theta(\tau)-\hat{m})+1 \geq G - \gamma_k - r_k, \ k=1,\dots,n
$$
or equivalently
\begin{equation} \label{eq:conditionA}
	\ell(\theta(\tau)-\hat{m}) \geq \max\{ G-g-1, \max_{k=1,\dots,n}\{ G-\gamma_k-r_k-1 \} \}.
\end{equation}

\paragraph{Case (B): \pmb{$n_{cp}<g$}}
Proposition~\ref{th:proposition} applied to $P_0(x),\dots,P_m(x)$ with $M=g$ ensures that $R_k = \Opiccolo(B_g)$ and thus, for $k=0,\dots,m$, 
$$
	\alpha_k^P = d_k \big(1 + (a_k)^\ell \big)
	\quad\text{ with }\quad
	a_k = \frac{\tau}{B_g + \epsilon} \frac{1}{1 + \frac{R_k}{B_g + \epsilon}}
	\sim
	\frac{\tau}{B_g + \epsilon}
	\sim
	\frac{C_\tau\DX^{\theta(\tau)}}
	     {C_g\DX^{2n_{cp}+2} + C_\epsilon\DX^{\hat{m}}}
	.
$$
Similarly, for $k=1,\dots,n$, 
we can use Proposition~\ref{th:propI} to obtain
$$
	\alpha_k^Q = \delta_k\big(1+(b_k)^\ell\big)
	\quad\text{ with }\quad
	b_k = \frac{\tau}{I_k^Q + \epsilon}
	\sim
	\frac{C_\tau\DX^{\theta(\tau)}}
	{\widehat{C}_k\DX^{2n_{cp}+2} + C_\epsilon\DX^{\hat{m}}}.
$$
We observe that if $2 n_{cp} + 2 > \hat{m}$, i.e. $\frac{\hat{m}-2}{2} < n_{cp} < g$, then $a_k,b_k \sim C \Delta x^{\theta(\tau)-\hat{m}} \to 0$ with $C=C_\tau/C_\epsilon$ independent on $k$. Consequently, we are in the same situation as Case (A) and we can apply Lemma~\ref{lem:lemma} and Lemma~\ref{lem:suffCond} to prove the sufficient conditions for optimal convergence given in~\eqref{eq:conditionA}.

In particular, $\hat{m}=1$ is always included in the analysis above. Instead, we must distinguish some sub-cases when $0 \leq n_{cp} \leq \frac{\hat{m}-2}{2}$, $\hat{m}>1$.

\paragraph{Case (B1): \pmb{$n_{cp}=\frac{\hat{m}-2}{2}, \ \hat{m} \geq 2$}} In this case, defining $t=\theta(\tau)-\hat{m}$, we have
$$
	a_k \sim \frac{C_\tau}{C_g + C_\epsilon}\DX^t
	\quad \mbox{ and } \quad
	b_k \sim \frac{C_\tau}{\widehat{C}_k + C_\epsilon}\DX^t.
$$
We cannot directly apply Lemma~\ref{lem:lemma}, but, defining $C = \frac{C_\tau}{C_g + C_\epsilon}$ and $\widetilde{C}_k = \frac{C_\tau}{\widehat{C}_k + C_\epsilon}$, we compute
\begin{align*}
	\sum_{k=0}^m \alpha_k^P + \sum_{k=1}^n \alpha_k^Q 
	&= \sum_{k=0}^m d_k \left( 1 + C^\ell \DX^{\ell t} \right)
	+ \sum_{k=1}^{n} \delta_k \left( 1 + \widetilde{C}_k^\ell \DX^{\ell t} \right) \\
	&= 1 
	+ \left( 1 - \sum_{k=1}^n \delta_k \right) C^\ell \DX^{\ell t} 
	+ \sum_{k=1}^{n} \widetilde{C}_k^\ell \DX^{\ell t + r_k} \\	
	&\sim 1 + C^\ell \DX^{\ell t}.
\end{align*}
Then, using~\eqref{eq:OmegaZ}, for $k=0,\dots,m$ we compute
$$
	\omega_k^P \sim  d_k \left( 1 + C^\ell \DX^{\ell t} \right) \left( 1 - C^\ell \DX^{\ell t} \right) = d_k \left( 1 + \Ogrande(\DX^{\ell t+1}) \right)
$$
and similarly for $k=1,\dots,n$ we have
$$
	\omega_k^Q \sim \delta_k \left( 1 + \widetilde{C}_k^\ell \DX^{\ell t} \right) \left( 1 - C \DX^{\ell t} \right) = \delta_k (1+\Ogrande(\DX^{\ell t})).
$$
Thus, sufficient conditions of Lemma~\ref{lem:suffCond} for optimal convergence are satisfied if
$$
	\ell(\theta(\tau)-\hat{m})+1 \geq G - g \quad \mbox{ and } \quad \ell(\theta(\tau)-\hat{m}) \geq G - \gamma_k - r_k, \ k=1,\dots,n
$$
or equivalently
\begin{equation}
\label{eq:conditionB1}
	\ell(\theta(\tau)-\hat{m}) \geq \max\{ G-g-1, \max_{k=1,\dots,n}\{ G-\gamma_k-r_k \} \}.
\end{equation}

\paragraph{Case (B2): \pmb{$n_{cp}<\frac{\hat{m}-2}{2}, \ \hat{m}\geq 3$}}

The analysis of the previous cases was independent on the choice of the degrees $\gamma_k$, $k=1,\dots,n$. Now, instead, we need to consider explicitly the case when 
a constant polynomial is present, say $\gamma_1,\dots,\gamma_{n-1}>0$, while $\gamma_n=0$.

In this case, we have that
$$
	a_k \sim \frac{C_\tau}{C_g + C_\epsilon}\DX^t, \ \quad
	b_k \sim \frac{C_\tau}{\widehat{C}_k + C_\epsilon}\DX^t
	\quad \mbox{ and } \quad
	b_n \sim \frac{C_\tau}{C_\epsilon}\DX^{\widehat{t}}
$$
where we have defined $t=\theta(\tau)-2n_{cp}-2$ and $\widehat{t}=\theta(\tau)-\hat{m}$. Notice that $t > \widehat{t}$.

Defining $C = \frac{C_\tau}{C_g + C_\epsilon}$, $\widetilde{C}_k = \frac{C_\tau}{\widehat{C}_k + C_\epsilon}$ and $\widetilde{C}_n = \frac{C_\tau}{C_\epsilon}$, we have that
\begin{align*}
	\sum_{k=0}^m \alpha_k^P + \sum_{k=1}^n \alpha_k^Q 
	&= \sum_{k=0}^m d_k \left( 1 + C^\ell \DX^{\ell t} \right)
	+ \sum_{k=1}^{n-1} \delta_k \left( 1 + \widetilde{C}_k^\ell \DX^{\ell t} \right) 
	+ \delta_n \left( 1 + \widetilde{C}_n^\ell \DX^{\ell \widehat{t}} \right) \\
	&= 1 
	+ \left( 1 - \sum_{k=1}^n \delta_k \right) C^\ell \DX^{\ell t} 
	+ \sum_{k=1}^{n-1} \tilde{C}_k^\ell \DX^{\ell t + r_k}
	+ \left( 1 + \tilde{C}_n^\ell \DX^{\ell \widehat{t} + r_n} \right) \\	
	&\sim 1 + C^\ell \DX^{\ell t}
\end{align*}
only if $r_n > \ell(t-\widehat{t}) = \ell(\hat{m}-2n_{cp}-2)$.

Then, using~\eqref{eq:OmegaZ}, for $k=0,\dots,m$ we compute
$$
\omega_k^P \sim  d_k \left( 1 + C^\ell \DX^{\ell t} \right) \left( 1 - C^\ell \DX^{\ell t} \right) = d_k \left( 1 + \Ogrande(\DX^{\ell t+1}) \right)
$$
and similarly for $k=1,\dots,n$ we have
$$
\omega_k^Q \sim \delta_k \left( 1 + \widetilde{C}_k^\ell \DX^{\ell t} \right) \left( 1 - C \DX^{\ell t} \right) = \delta_k (1+\Ogrande(\DX^{\ell t}))
$$
for $k=1,\ldots,n-1$ and
$$
\omega_n^Q \sim \delta_k \left( 1 + \widetilde{C}_n^\ell \DX^{\ell\widehat{t}} \right) \left( 1 - C \DX^{\ell t} \right) = \delta_n (1+\Ogrande(\DX^{\ell\widehat{t}})).
$$
Thus, sufficient conditions of Lemma~\ref{lem:suffCond} for optimal convergence are satisfied if
$$
\ell(\theta(\tau)-2n_{cp}-2)+1 \geq G - g, \ \quad \ell(\theta(\tau)-2n_{cp}-2) \geq G - \gamma_k - r_k, \ k=1,\dots,n-1
$$
and
$$
	\ell(\theta(\tau)-\hat{m}) \geq G - r_n.
$$
In the general case these conditions become
\begin{subequations} \label{eq:conditionB2}
	\begin{gather}
	\ell(\theta(\tau)-2n_{cp}-2) \geq \max\{ G-g-1, \max_{\{ k : \, \gamma_k > 0 \}}\{ G-\gamma_k-r_k \} \} \label{eq:conditionB2nonconst}
	\\
	\ell(\theta(\tau)-\hat{m}) \geq G - r_k, \quad \{ k: \, \gamma_k = 0 \}. \label{eq:conditionB2const}
	\end{gather}
\end{subequations}

\renewcommand{\arraystretch}{1.25}
\begin{table}[t!]
	\centering
	\caption{Summary of the sufficient conditions, and related cases, for optimal convergence depending on the values of $n_{cp} \leq 4$ and $\hat{m} \leq 4 $.\label{tab:generalCondit}}
	\begin{tabular}{ll|c|c|c|c|c|}
		\cline{3-7}
		&   & \multicolumn{5}{c|}{$n_{cp}$}                                                                                                                                                                                                                                                                                                                                                                                                                                                  \\ \cline{3-7}
		&   & 0                                                                                          & 1                                                                                          & 2                                                                                          & 3                                                                                          & 4                                                                                          \\ \hline
		\multicolumn{1}{|l|}{\multirow{7}{*}{$\hat{m}$}} & 1 & \begin{tabular}[c]{@{}c@{}}Eq.~\eqref{eq:conditionA} \\ \textcolor{gray}{(A)-(B)}\end{tabular} & \begin{tabular}[c]{@{}c@{}} Eq.~\eqref{eq:conditionA} \\ \textcolor{gray}{(A)-(B)} \end{tabular} & \begin{tabular}[c]{@{}c@{}} Eq.~\eqref{eq:conditionA} \\ \textcolor{gray}{(A)-(B)} \end{tabular} & \begin{tabular}[c]{@{}c@{}} Eq.~\eqref{eq:conditionA} \\ \textcolor{gray}{(A)-(B)} \end{tabular} & \begin{tabular}[c]{@{}c@{}} Eq.~\eqref{eq:conditionA} \\ \textcolor{gray}{(A)-(B)} \\\end{tabular} \\ \cline{2-7} 
		\multicolumn{1}{|l|}{}                           & 2 & \begin{tabular}[c]{@{}c@{}} Eq.~\eqref{eq:conditionB1} \\ \textcolor{gray}{(B1)}\end{tabular}     & \begin{tabular}[c]{@{}c@{}} Eq.~\eqref{eq:conditionA} \\ \textcolor{gray}{(A)-(B)} \end{tabular} & \begin{tabular}[c]{@{}c@{}} Eq.~\eqref{eq:conditionA} \\ \textcolor{gray}{(A)-(B)} \end{tabular} & \begin{tabular}[c]{@{}c@{}} Eq.~\eqref{eq:conditionA} \\ \textcolor{gray}{(A)-(B)}\end{tabular} & \begin{tabular}[c]{@{}c@{}} Eq.~\eqref{eq:conditionA} \\ \textcolor{gray}{(A)-(B)}\end{tabular} \\ \cline{2-7} 
		\multicolumn{1}{|l|}{}                           & 3 & \begin{tabular}[c]{@{}c@{}} Eq.~\eqref{eq:conditionB2} \\ \textcolor{gray}{(B2)}\end{tabular}     & \begin{tabular}[c]{@{}c@{}} Eq.~\eqref{eq:conditionA} \\ \textcolor{gray}{(A)-(B)}\end{tabular} & \begin{tabular}[c]{@{}c@{}} Eq.~\eqref{eq:conditionA} \\ \textcolor{gray}{(A)-(B)}\end{tabular} & \begin{tabular}[c]{@{}c@{}}Eq.~\eqref{eq:conditionA} \\ \textcolor{gray}{(A)-(B)}\end{tabular} & \begin{tabular}[c]{@{}c@{}}Eq.~\eqref{eq:conditionA} \\ \textcolor{gray}{(A)-(B)}\end{tabular} \\ \cline{2-7} 
		\multicolumn{1}{|l|}{}                           & 4 & \begin{tabular}[c]{@{}c@{}}Eq.~\eqref{eq:conditionB2} \\ \textcolor{gray}{(B2)}\end{tabular}     & \begin{tabular}[c]{@{}c@{}}Eq.~\eqref{eq:conditionB1} \\ \textcolor{gray}{(B1)}\end{tabular}    & \begin{tabular}[c]{@{}c@{}}Eq.~\eqref{eq:conditionA} \\ \textcolor{gray}{(A)-(B)}\end{tabular} & \begin{tabular}[c]{@{}c@{}}Eq.~\eqref{eq:conditionA} \\ \textcolor{gray}{(A)-(B)}\end{tabular} & \begin{tabular}[c]{@{}c@{}}Eq.~\eqref{eq:conditionA} \\ \textcolor{gray}{(A)-(B)}\end{tabular} \\ \hline
	\end{tabular}
\end{table}

Summarizing, relations~\eqref{eq:mCondition}, \eqref{eq:conditionA}, \eqref{eq:conditionB1} and~\eqref{eq:conditionB2} define the complete set of sufficient conditions for optimal convergence. In particular, once~\eqref{eq:mCondition} is applied in order to find an upper bound for $\hat{m}$, conditions on $\ell$ and $r_k$ can be found. We summarize them in Table~\ref{tab:generalCondit} and for some prototype reconstructions in the following section.

\subsection{A $7$-th order $\CWENOZAO$ reconstruction} \label{ssec:examples}

The definition of the $\CWENOZAO$ reconstruction, see Definition~\ref{def:CWENOZAO}, allows to define a very wide set of reconstructions characterized by a very high order gap between the optimal polynomial and lowest degree polynomials. Moreover, we stress the fact that $\CWENOZAO$ does not require a base level reconstruction, therefore sharing the feature of the $\WENOAO$ reconstruction by~\cite{AHZ18:wao}.

Here, we propose one adaptive order reconstruction of order $7$ which we name $\CWZ$ and it will be later numerically compared with the $\WENOAO$ reconstructions of order $7$ introduced in~\cite{AHZ18:wao,BGS:wao}. The $\CWZ$ is characterized by a stencil $\Sopt$ of seven cells centered on the computational cell $\Omega_0$ \revision{which is the same stencil on which the $\WENOAO$ reconstructions of~\cite{AHZ18:wao} and~\cite{BGS:wao} are built.} The optimal polynomial $\Popt \in \Poly{6}$ \minor{interpolates} the data in $\Sopt$ and \minor{the} polynomial $P_1 \in \Poly{4}$ \minor{interpolates} the data of the five \minor{central} cells \minor{of} $\Sopt$. 
The lower degree polynomials \minor{$Q_1, Q_2, Q_3 \in \Poly{2}$ are the parabolas} interpolating the data of the three substencils $\{\Omega_{k-3},\Omega_{k-2},\Omega_{k-1}\}$ for $k=1,2,3$.
Consequently, in the notation of Definition~\ref{def:CWENOZAO}, we are in the case of $G=6$, $g=4$ and $\gamma_k = 2$, for $k=1,2,3$, and the reconstruction polynomial is defined by
\begin{equation} \label{eq:cwz753}
	\Prec = \CWENOZAO ({\Popt};{P_1};{Q}_1,{Q}_2,{Q}_3) \in \Poly{6}.
\end{equation}
We employ as global smoothness indicator \minor{$\tau= \left|I_0 - I[P_1]\right|.$} Let $u=u(x)$ be a function continuously differentiable with $x\in\R$, and let $\Omega_0 = \left[ -\frac{\Delta x}{2} , \frac{\Delta x}{2} \right]$, then the Taylor expansion of the global smoothness indicator $\tau$ is 
$$
\begin{aligned}
	\tau = 
	&\tfrac{1}{15} u^\prime(0) u^{(5)}(0) \Delta x^6 
	\\
	&+ \left[ \tfrac{1}{40} u^\prime(0) u^{(7)}(0) 
	+\tfrac{31}{1260} u^{\prime\prime}(0) u^{(6)}(0) 
	-\tfrac{16403}{30240} u^{\prime\prime\prime}(0) u^{(5)}(0) \right] \Delta x^8 
	\\&+ \Ogrande(\Delta x^{10}).
\end{aligned}
$$
We use it to compute the parameters $\hat{m}$, $\ell$ and $r_k$, $k=1,2,3$, satisfying the sufficient conditions for optimal convergence. All the values, for each $n_{cp}$ and $\hat{m}$, are summarized in Table~\ref{tab:conditions7} by using the smaller value allowed for $g$, i.e. $g = \frac{G}{2} = 3$. Condition~\eqref{eq:mCondition} provides the restriction $\hat{m} \leq 7$.

\begin{remark}
	Both in~\cite{BGS:wao} and~\cite{AHZ18:wao}, a $\WENOAO$ reconstruction of order $9$ is considered, where the base level is again represented by the three parabolas as in the $\WENOAO(7,5,3)$ reconstruction. The ninth-order scheme computes a reconstruction polynomial as
	\begin{equation} \label{eq:wao9753}
		\Prec(x) = \CWENOZ\Big(\CWENOZ(P^{(9)},P^{(3)}_L,P^{(3)}_C,P^{(3)}_R);\WENOAO(7,5,3)\Big).
	\end{equation}
	The corresponding reconstruction based on the $\CWENOZAO$ definition would compute the reconstruction polynomial as
	\begin{equation} \label{eq:cwz9753}
		\Prec(x) = \CWENOZAO\Big(P^{(9)};P^{(7)},P^{(5)};P^{(3)}_L,P^{(3)}_C,P^{(3)}_R\Big).
	\end{equation}
	Comparing~\eqref{eq:wao9753} and~\eqref{eq:cwz9753}, we observe that $\CWENOZAO$ always computes one set of \minor{nonlinear} coefficients. The $\WENOAO$ scheme of~\cite{BGS:wao} needs five sets of \minor{nonlinear} weights, while the $\WENOAO$ scheme of~\cite{AHZ18:wao} requires three sets of \minor{nonlinear} coefficients.
	It is then clear that, increasing the order, the gap between the computational cost required by the $\CWENOZAO$ and the $\WENOAO$ reconstructions becomes wider. 
\end{remark}

%
%
%

\renewcommand{\arraystretch}{1.25}
\begin{table}[t!]
	\centering
	\caption{Sufficient conditions for optimal convergence for the reconstruction $\CWZ$ defined in Section~\ref{ssec:examples} and by equation~\eqref{eq:cwz753}.\label{tab:conditions7}}
	\begin{tabular}{ll|c|c|c|c||c|}
		\cline{3-7}
		&   & \multicolumn{4}{c|}{$n_{cp}$}                                                                                                                                                                                                                                                                                                                                   & \multirow{3}{*}{Summary}                                                            \\ \cline{3-6}
		&   & \begin{tabular}[c]{@{}c@{}}0\\ $\theta(\tau) = 6$\end{tabular} & \begin{tabular}[c]{@{}c@{}}1\\ $\theta(\tau) = 8$\end{tabular} & \begin{tabular}[c]{@{}c@{}}2\\ $\theta(\tau)=8$\end{tabular}                        & \begin{tabular}[c]{@{}c@{}}$\geq$3\\ $\theta(\tau) = 10$\end{tabular} &                                                                                     \\ \hline
		\multicolumn{1}{|l|}{\multirow{9}{*}{$\hat{m}$}} & 1 & $\ell\geq1$, $r_k\geq1$                                           & $\ell\geq1$, $r_k\geq1$                                           & $\ell\geq1$, $r_k\geq1$                                                                & $\ell\geq1$, $r_k\geq1$                                            & $\ell\geq1$, $r_k\geq1$                                                                \\ \cline{2-7} 
		\multicolumn{1}{|l|}{}                           & 2 & $\ell\geq1$, $r_k\geq1$                                           & $\ell\geq1$, $r_k\geq1$                                           & $\ell\geq1$, $r_k\geq1$                                                                & $\ell\geq1$, $r_k\geq1$                                            & $\ell\geq1$, $r_k\geq1$                                                                \\ \cline{2-7} 
		\multicolumn{1}{|l|}{}                           & 3 & $\ell\geq1$, $r_k\geq1$                                           & $\ell\geq1$, $r_k\geq1$                                           & $\ell\geq1$, $r_k\geq1$                                                                & $\ell\geq1$, $r_k\geq1$                                            & $\ell\geq1$, $r_k\geq1$                                                                \\ \cline{2-7} 
		\multicolumn{1}{|l|}{}                           & 4 & $\ell\geq1$, $r_k\geq1$                                           & $\ell\geq1$, $r_k\geq1$                                           & $\ell\geq1$, $r_k\geq1$                                                                & $\ell\geq1$, $r_k\geq1$                                            & $\ell\geq1$, $r_k\geq1$                                                                \\ \cline{2-7} 
		\multicolumn{1}{|l|}{}                           & 5 & $\ell\geq1$, $r_k\geq1$                                           & $\ell\geq1$, $r_k\geq1$                                           & $\ell\geq1$, $r_k\geq1$                                                                & $\ell\geq1$, $r_k\geq1$                                            & $\ell\geq1$, $r_k\geq1$                                                                \\ \cline{2-7} 
		\multicolumn{1}{|l|}{}                           & 6 & $\ell\geq1$, $r_k\geq1$                                           & $\ell\geq1$, $r_k\geq1$                                           & \begin{tabular}[c]{@{}c@{}}$\ell\geq2$, $r_k\geq1$\\ $\ell\geq1$, $r_k\geq2$\end{tabular} & $\ell\geq1$, $r_k\geq1$                                            & \begin{tabular}[c]{@{}c@{}}$\ell\geq2$, $r_k\geq1$\\ $\ell\geq1$, $r_k\geq2$\end{tabular} \\ \cline{2-7} 
		\multicolumn{1}{|l|}{}                           & 7 & $\ell\geq1$, $r_k\geq1$                                           & $\ell\geq1$, $r_k\geq1$                                           & \begin{tabular}[c]{@{}c@{}}$\ell\geq3$, $r_k\geq1$\\ $\ell\geq2$, $r_k\geq2$\end{tabular} & $\ell\geq1$, $r_k\geq1$                                            & \begin{tabular}[c]{@{}c@{}}$\ell\geq3$, $r_k\geq1$\\ $\ell\geq2$, $r_k\geq2$\end{tabular} \\ \hline
	\end{tabular}
\end{table}

\section{Numerical experiments} \label{sec:AOsimulations}

Here we numerically test the performance of the adaptive order $\CWENOZAO$ scheme of order $7$ proposed in Section~\ref{ssec:examples}. This reconstruction will be referred to as \CWZ\ in this section. We compare it with the two $\WENOAO$ reconstructions of order $7$ reviewed in Section~\ref{ssec:wenoAO}. For convenience, they will be referred to as \WAOAHZ\ for the reconstruction of \cite{AHZ18:wao} and \WAOBS\ for the reconstruction of \cite{BGS:wao}.
The study is performed in terms of accuracy and computational cost.

First, in Section~\ref{ssec:recaccuracy} we test the optimal order of convergence of the novel schemes for several choices of the parameters, with the aim of providing a numerical evidence supporting the convergence results of Section~\ref{ssec:cwenozAO}. In Section~\ref{ssec:linear}, the non-oscillatory properties will be analyzed on the linear advection of a non-smooth datum. Next, in Section~\ref{ssec:euler} we consider one-dimensional test problems based on the system of Euler equations for gas dynamics. Finally, in Section~\ref{ssec:balance} we show the performance of the schemes for the solution of a system of balance laws. In Section~\ref{ssec:efficiency} all these experiments are supported by comparisons on the computational cost required by the different schemes. 

For the numerical solution we set up a finite volume scheme of order 7 on a uniform grid, based on the method of lines and the seventh order Runge-Kutta scheme with nine stages~\cite[page 196]{Butcher:2008}. At each Runge-Kutta stage, the cell averages are used to compute the reconstructions and the boundary extrapolated data are fed into the Local Lax-Friedrichs numerical flux. The source term and the initial data are computed with the four point Gaussian quadrature of order seven.
All the simulations are run with a \revision{CFL of $0.9$, unless otherwise stated}.

The $\CWENOZAO$ reconstruction employs 
$\delta_k = \min\left( \Delta x^{r_k}, 0.01\right)$ 
for $k=1,\dots,m$,
$d_1 = 0.15$
and a central optimal weight $d_0=0.85-\sum_{k=1}^m \delta_k$. The values of $\hat{m}$, $\ell$ and $r_k$, $k=1,\dots,m$, are specified in each test. Instead, for the numerical solution of systems of conservation and balance laws we do not perform any tuning on the parameters and we consider $\hat{m}=4$, $\ell=2$ and $r_k=1$, $\forall k$. The $\WENOAO$ reconstructions are implemented as described in~\cite{AHZ18:wao}, and used with the same set of parameters $\hat{m}$ and $\ell$ specified therein.

All computations have been performed with the {\tt claw1dArena} open-source software  \cite{claw1dArena}.

\subsection{Accuracy of the reconstructions}\label{ssec:recaccuracy}
For the accuracy tests we consider the following functions and critical points:
\begin{center}
	\begin{tabular}{cll}
		$n_{cp}$ & \multicolumn{1}{c}{function} & $x_{\text{crit}}$ \\
		0 & $u_0(x)=e^{-x^2}$ & $0.2$\\
		1 & $u_1(x)=\sin(\pi x -\sin(\pi x)/\pi)$ & $0.596683186911209$\\
		2 & $u_2(x)=1.0+\sin^3(\pi x) $ & $0.0$
	\end{tabular}
\end{center}
Obviously, for $n_{cp}=0$, $x_{\text{crit}}=0.2$ is  an evaluation point rather than a critical point.
We compute the reconstruction polynomial for the cell containing the critical point, and the cell averages in the stencil are initialized with the Gauss-Legendre quadrature rule with $4$ nodes. For these tests, quadruple precision has been used. We aim to study numerically the conditions for the optimal convergence discussed in Section~\ref{sssec:analysisSmooth}.

In Table~\ref{tab:recErrorsA} and Table~\ref{tab:recErrorsB} we show reconstruction errors and convergence rates for the parameter values $\hat{m}=4$, $\ell=2$ and $r_k=1$, $\forall k$, and which are used in the following numerical experiments, on critical points of order $n_{cp}=0$ and $n_{cp}=1$, respectively. The optimal order of reconstruction is reached already on coarse grids, and we observe a similar behavior for any $n_{cp}\geq2$.

In Table~\ref{tab:recErrorsC} and Table~\ref{tab:recErrorsD} we focus on $n_\text{cp}=2$, since this value induces a restriction on the parameters $\ell$ and $r_k$ when $\hat{m}=6,7$, see Table~\ref{tab:conditions7}. In particular, in Table~\ref{tab:recErrorsC} we consider a set of parameters, $\hat{m}=6$, $\ell=1$ and $r_k=1$, which does not satisfy the sufficient conditions for optimal convergence. We observe that the order of convergence degrades to order $6$. Conversely, in Table~\ref{tab:recErrorsD} the set of parameters, $\hat{m}=6$, $\ell=1$ and $r_k=2$, satisfy the sufficient conditions for optimal convergence and the optimal expected order is reached already on coarse grids.

This analysis shows that, although the conditions derived in Section~\ref{sssec:analysisSmooth} are only sufficient, they are rather strict since optimal order is degraded when these conditions are not fulfilled.

\begin{table}[t!]
	\caption{Reconstruction errors and order of convergence for $\CWZ$.\label{tab:recErrors}}
	\centering
	\subfloat[$\hat{m}=4, \ell=2, r=1$ on $n_{cp}=0$.\label{tab:recErrorsA}]{
		\pgfplotstabletypeset[
		col sep=comma,
		sci zerofill,
		empty cells with={--},
		every head row/.style={before row=\toprule,after row=\midrule},
		every last row/.style={after row=\bottomrule},
		create on use/rate/.style={create col/dyadic refinement rate={1}},
		columns/0/.style={column name={$\Delta x$}},
		columns/1/.style={column name={error}},
		columns/rate/.style={fixed zerofill},
		columns={0,1,rate},
		skip rows between index={8}{13}
		]
		{cwz753_r1_a2_m4_cp0.err}
 	}
	\subfloat[$\hat{m}=4, \ell=2, r=1$ on $n_{cp}=1$.\label{tab:recErrorsB}]{
		\pgfplotstabletypeset[
		col sep=comma,
		sci zerofill,
		empty cells with={--},
		every head row/.style={before row=\toprule,after row=\midrule},
		every last row/.style={after row=\bottomrule},
		create on use/rate/.style={create col/dyadic refinement rate={1}},
		columns/0/.style={column name={$\Delta x$}},
		columns/1/.style={column name={error}},
		columns/rate/.style={fixed zerofill},
		columns={0,1,rate},
		skip rows between index={8}{13}
		]
		{cwz753_r1_a2_m4_cp1.err}
	}
	\\
	\subfloat[$\hat{m}=6, \ell=1, r=1$ on $n_{cp}=2$.\label{tab:recErrorsC}]{
		\pgfplotstabletypeset[
		col sep=comma,
		sci zerofill,
		empty cells with={--},
		every head row/.style={before row=\toprule,after row=\midrule},
		every last row/.style={after row=\bottomrule},
		create on use/rate/.style={create col/dyadic refinement rate={1}},
		columns/0/.style={column name={$\Delta x$}},
		columns/1/.style={column name={error}},
		columns/rate/.style={fixed zerofill},
		columns={0,1,rate},
		skip rows between index={8}{14}
		]
		{cwz753_r1_a1_m6_cp2.err}
	}
	\subfloat[$\hat{m}=6, \ell=1, r=2$ on $n_{cp}=2$.\label{tab:recErrorsD}]{
		\pgfplotstabletypeset[
		col sep=comma,
		sci zerofill,
		empty cells with={--},
		every head row/.style={before row=\toprule,after row=\midrule},
		every last row/.style={after row=\bottomrule},
		create on use/rate/.style={create col/dyadic refinement rate={1}},
		columns/0/.style={column name={$\Delta x$}},
		columns/1/.style={column name={error}},
		columns/rate/.style={fixed zerofill},
		columns={0,1,rate},
		skip rows between index={8}{14}
		]
		{cwz753_r2_a1_m6_cp2.err}
	}
\end{table}

\subsection{Linear transport problem: Jiang-Shu test} \label{ssec:linear}

We solve the linear scalar conservation law
\begin{equation*} \label{eq:linadv}
u_t+u_x=0
\end{equation*}
on the periodic domain $x\in[-1,1]$ and up to final time $T=8$. As initial condition we consider the non-smooth profile
\begin{subequations}\label{eq:jiangshu}
	\begin{equation} 
	u_0(x) =
	\begin{cases}
	\frac16 \left( G(x,\beta,z-\delta) + G(x,\beta,z+\delta) + 4G(x,\beta,z) \right), & -0.8 \leq x \leq -0.6,\\
	1, & -0.4 \leq x \leq -0.2,\\
	1-\left|10(x-0.1)\right|, & 0 \leq x \leq 0.2,\\
	\frac16 \left( F(x,\alpha,a-\delta) + F(x,\alpha,a+\delta) + 4F(x,\alpha,a) \right), & 0.4 \leq x \leq 0.6,\\
	0, & \text{otherwise}		
	\end{cases}
	\end{equation}
	where
	\begin{equation} 
	G(x,\beta,z) = \exp(-\beta(x-z)^2), \quad F(x,\alpha,a) = \sqrt{\max\{1 - \alpha^2(x - a)^2,0\}}
	\end{equation}
\end{subequations}
and the constants are taken as $a = 0.5$, $z = -0.7$, $\delta = 0.005$, $\alpha = 10$ and $\beta = \log 2/36 \delta^2$. This problem, designed by Jiang and Shu in~\cite{JiangShu:96}, is used in order to investigate the properties of a scheme to transport different shapes with minimal dissipation and dispersion effects. The initial condition~\eqref{eq:jiangshu} is a combination of smooth and non-smooth shapes: precisely, from the left to the right side of the domain, we have a Gaussian, a square wave, a sharp triangle wave and a half ellipse.

\begin{figure}[t!]
	\includegraphics[width=\textwidth]{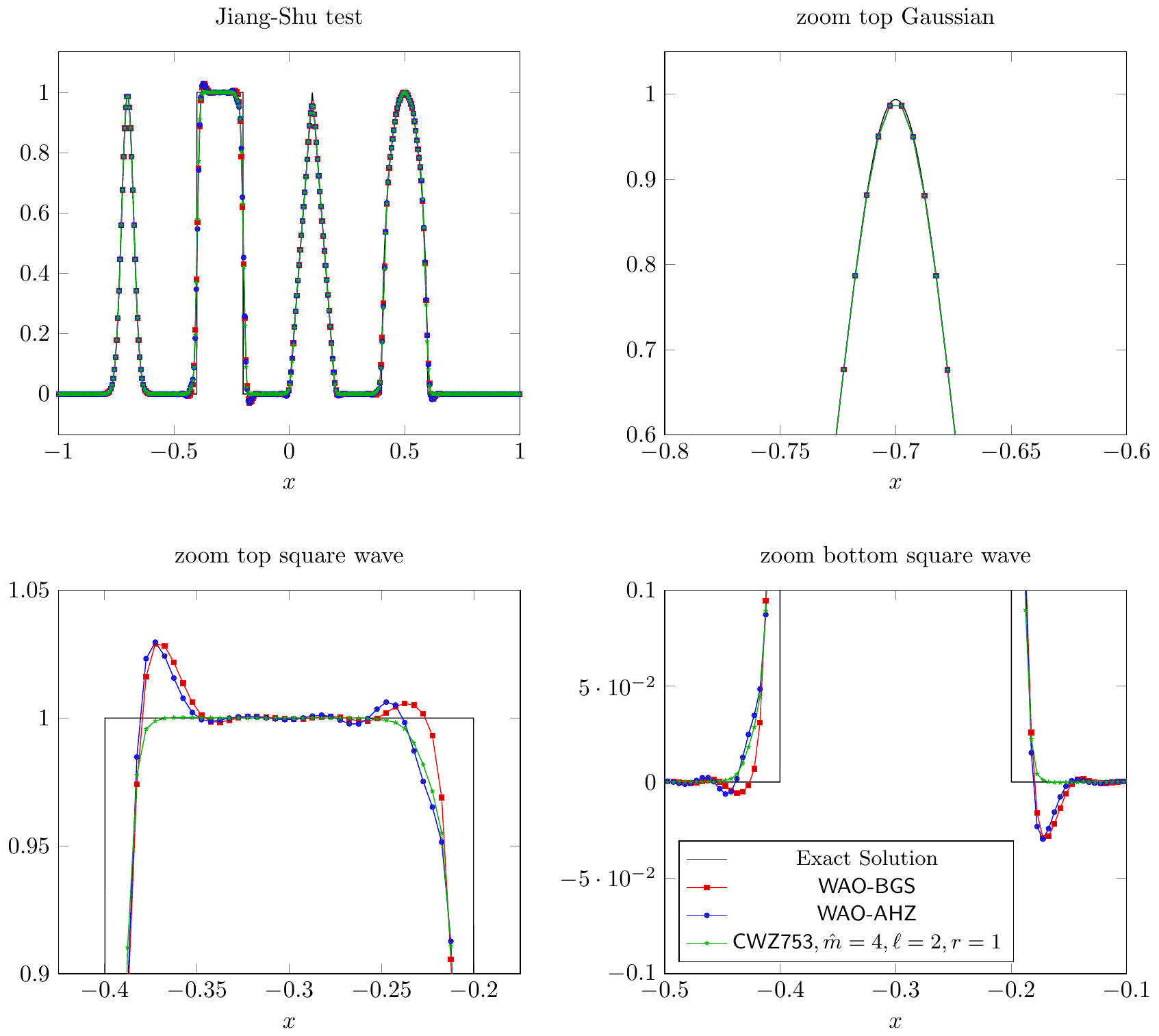}
	\caption{Numerical solution of the Jiang-Shu test problem~\eqref{eq:jiangshu} with schemes of order $7$ and $400$ cells, with zoom on the top part of the Gaussian wave, the top and the bottom part of the square wave.\label{fig:jshu7}}
\end{figure}

Figure~\ref{fig:jshu7} shows the numerical solutions of the Jiang and Shu test problem computed with the $\CWENOZAO$ and the $\WENOAO$ schemes of order $7$ on $400$ cells. In particular, zoom on the top part of the Gaussian wave and of the square wave and zoom on the bottom part of the square wave are considered in order to give information on the behavior of the schemes on smooth and non-smooth zones of the solution. We observe that all the schemes perform similarly and without significant difference on smooth zones. However, compared to $\WENOAO$, the novel adaptive order reconstruction introduced in this work presents less oscillations close to discontinuities. This is made possible by the very small value of $\epsilon$ (large $\hat{m}$) which allows to damp the spurious oscillations. Taking $\hat{m}=3$ we observe small overshoots and undershoots, but their amplitude is still less than the amplitude of the oscillations produced by the $\WENOAO$ schemes. Choices of $\hat{m} < 3$ lead to more oscillating solutions.


\subsection{Euler equations} \label{ssec:euler}

We consider the one-dimensional system of Euler equations for gas dynamics
\[
\pder{}{t} 
\left( \begin{array}{c}
\rho \\ \rho u \\ E
\end{array}\right) +
\pder{}{x} 
 \left( \begin{array}{c}
\rho u \\ \rho u^2 + p \\ u(E+p)
\end{array}\right)  = 0,
\]
where $\rho$, $u$, $p$ and $E$ are the density, velocity, pressure and energy per unit volume of an ideal gas, whose equation of state is
$ E = \frac{p}{\gamma-1} + \frac12 \rho u^2, $
where $\gamma = 1.4$.

\paragraph{Shock-acoustic interaction problem.}
This consists in computing the interaction of a strong shock with an acoustic wave on the domain $x\in[-5,5]$ with free-flow boundary conditions. The problem was introduced by Shu and Osher in~\cite{ShuOsher:89} and is characterized by a Mach 3 shock wave interacting with a standing sinusoidal density wave. The solution, behind the main strong shock, develops a combination of smooth waves and small discontinuities. The initial condition is
$$
(\rho,u,p) = \begin{cases}
(3.857143, 2.629369, 10.333333), & x < -4\\
(1+0.2\sin(5x),0,1), & x \geq -4
\end{cases}
$$
and we run the problem up to the final time $T=1.8$.

\begin{figure}[t!]
	\includegraphics[width=\textwidth]{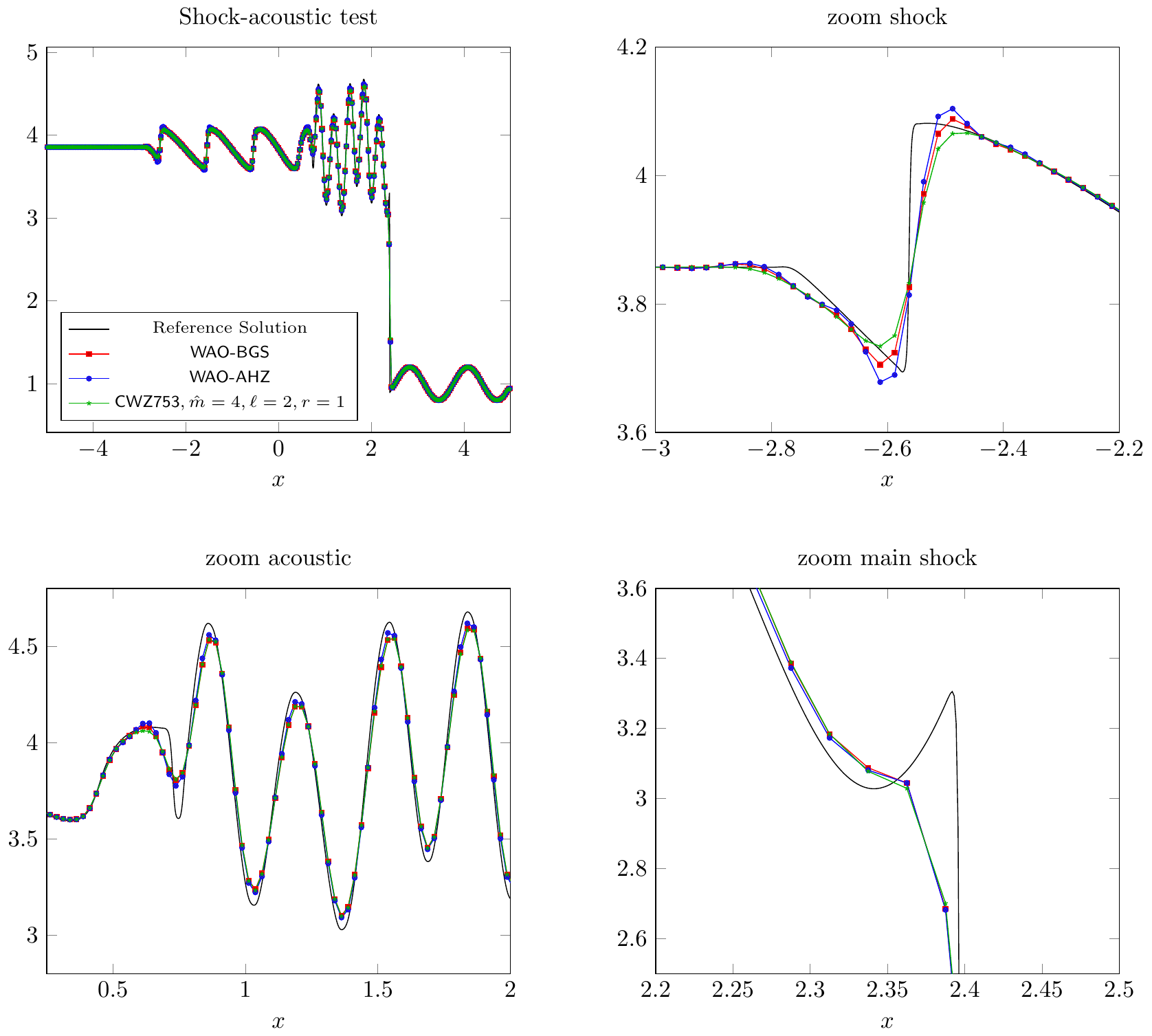}
	\caption{Numerical solution of the shock-acoustic wave interaction problem with schemes of order $7$ on $400$ cells and zoom on three regions of the density profile. Reconstructions are performed along characteristic variables.\label{fig:shockacoustic7}}
\end{figure}

Figure~\ref{fig:shockacoustic7} shows the numerical results computed with the $\CWENOZAO$ and the two $\WENOAO$ schemes of order $7$ on $400$ cells \revision{using a CFL of $0.75$}. We consider the zoom-in of the solution in three regions of the computational domain: the first shocklet, the turbulence zone characterized by the smooth high-frequency solution behind the main shock and the main shock itself. The reference solution (black line) was generated using $8000$ cells and the third order $\CWENO$ scheme. All the reconstructions are computed along characteristic variables. \revision{It should be noted that a local application of the local characteristic projection in few cells around the discontinuities is enough~\cite{Puppo:2003,PuppoSemplice:2011}.}
We observe that all the schemes provide very similar accuracy on the smooth region, where the \WAOAHZ\ scheme  has a slight better resolution close to the extrema. We observe instead different behaviors in the approximation of the first shock in the shocklets region. The \CWZ\ reconstruction is more diffusive, avoiding the small oscillations produced by the $\WENOAO$ schemes. Note that, instead, no extra oscillations at the main shock, which is very underresolved, are observed using any of the schemes.


\paragraph{Lax test.}
We solve the Riemann problem by Lax which is characterized by following initial states:
$$
	(\rho,u,p) = \begin{cases} (0.445,0.6989,3.5277), & x < 0.5 \\ (0.5,0,0.571), & x \geq 0.5 \end{cases}
$$
up to final time $T=0.15$.
The solution develops a rarefaction wave traveling left, a contact discontinuity and a shock, both with positive speeds. This test is challenging when solved by high-order schemes which might produce spurious oscillations on the density peak, between the contact discontinuity and the shock. The oscillations are originated by the interaction between waves in the first stages of the solution, when the discontinuities are so close that the algorithm cannot find a smooth stencil. They can be partly cured computing the reconstruction along characteristic fields, where the waves are approximately decoupled, \cite{QiuShu:02}. Adaptive order reconstructions being able to include low order base levels can also help to reduce oscillations, since smooth stencils can be used. 

\begin{figure}[t!]
	\includegraphics[width=\textwidth]{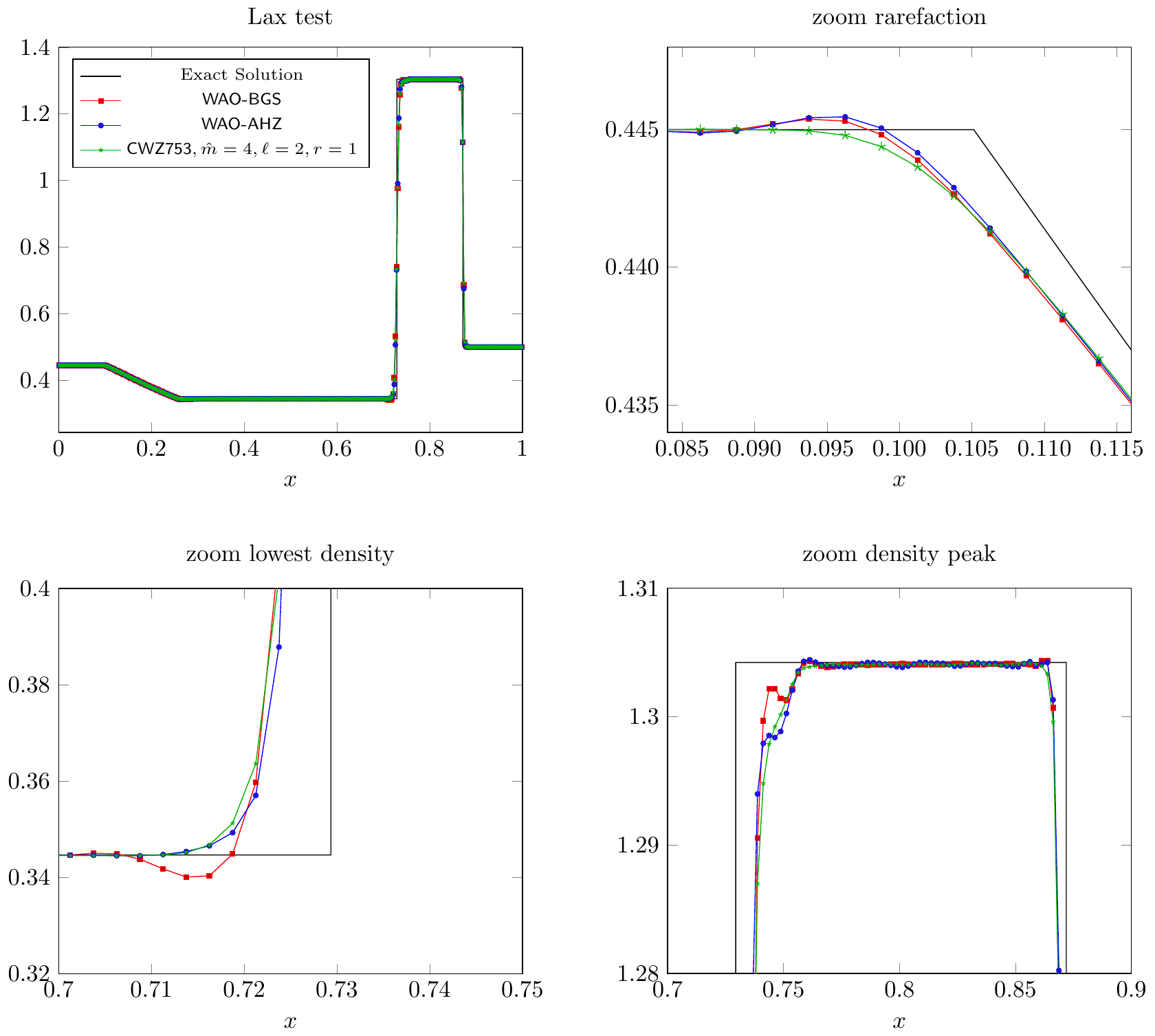}
	\caption{Numerical solution of the Lax test problem with schemes of order $7$ on $400$ cells and zoom on three regions of the density profile. Reconstructions are performed along characteristic variables.\label{fig:lax7}}
\end{figure}

In Figure~\ref{fig:lax7} we show the numerical solution provided by the $\CWENOZAO$ and the $\WENOAO$ schemes of order $7$ on a grid of $N=400$ cells. We consider the zoom-in on the top of the rarefaction wave, on the lowest density point corresponding to the bottom of the contact discontinuity and, finally, on the density peak. The solution of the Riemann problem at final time (black line) is computed exactly~\cite{Toro:book}. All the schemes use reconstructions along characteristic variables.
We observe that the $\WENOAO$ scheme given in~\cite{BGS:wao} produces small oscillations in the three regions, while the $\WENOAO$ scheme given in~\cite{AHZ18:wao} produces a small overshoot only around the top of the rarefaction wave. Instead, the $\CWENOZAO$ reconstructions does not develop spurious oscillations, even if it is less accurate than the $\WENOAO$ scheme given in~\cite{AHZ18:wao} on the bottom part of the contact discontinuity.


\subsection{Balance laws: Euler equations in spherical symmetry} \label{ssec:balance}

In the case of radial symmetry, the multi-dimensional gas dynamics equations can be written as a one-dimensional system, with a source term, which takes into account the geometrical effect, \cite[\S1.6.3]{Toro:book}.
In radial symmetry all the variable are functions of the time $t$ and the radial distance from the origin $\sigma$. Radially symmetric solutions of the Euler equations in $\R^d$ may be computed by solving

\[ \pder{}{t} \left( \begin{array}{c}
\rho \\ \rho u \\ E
\end{array}\right) +
\pder{}{\sigma} \left( \begin{array}{c}
\rho u \\ \rho u^2 + p \\ u(E+p)
\end{array}\right)  
= 
-\frac{d-1}{\sigma} 
\left( \begin{array}{c}
\rho u \\ \rho u^2 \\ up
\end{array}\right)
\]
where $\rho$, $u$, $p$ and $E$ are density, radial velocity, pressure and energy per unit volume of an ideal gas, whose equation of state is still specified by
$ E = \frac{p}{\gamma-1} + \frac12 \rho u^2, $
with $\gamma = 1.4$.  When $d=2$ we have cylindrical symmetry, an approximation to two-dimensional flow. When $d=3$ we have spherical symmetry, an approximation to three-dimensional flow.

We solve the so-called ``explosion problem'' in three space dimensions, which has a shock tube like initial data. In our case, we take Sod's test data, namely
$$
	(\rho,u,p) = \begin{cases} (1,0,1), & \sigma<0.5\\ 
								(0.125,0,0.1), & \sigma>0.5.
				\end{cases}
$$
The final time of the simulation is \minor{$T=0.5$}.
We compute the solution for $\sigma\in[0,1]$ with wall boundary conditions. The Gaussian quadrature formulas of order 7 is employed to compute the cell average of the source term, which also avoids quadrature nodes at $\sigma=0$.

\begin{figure}[t!]
	\includegraphics[width=\textwidth]{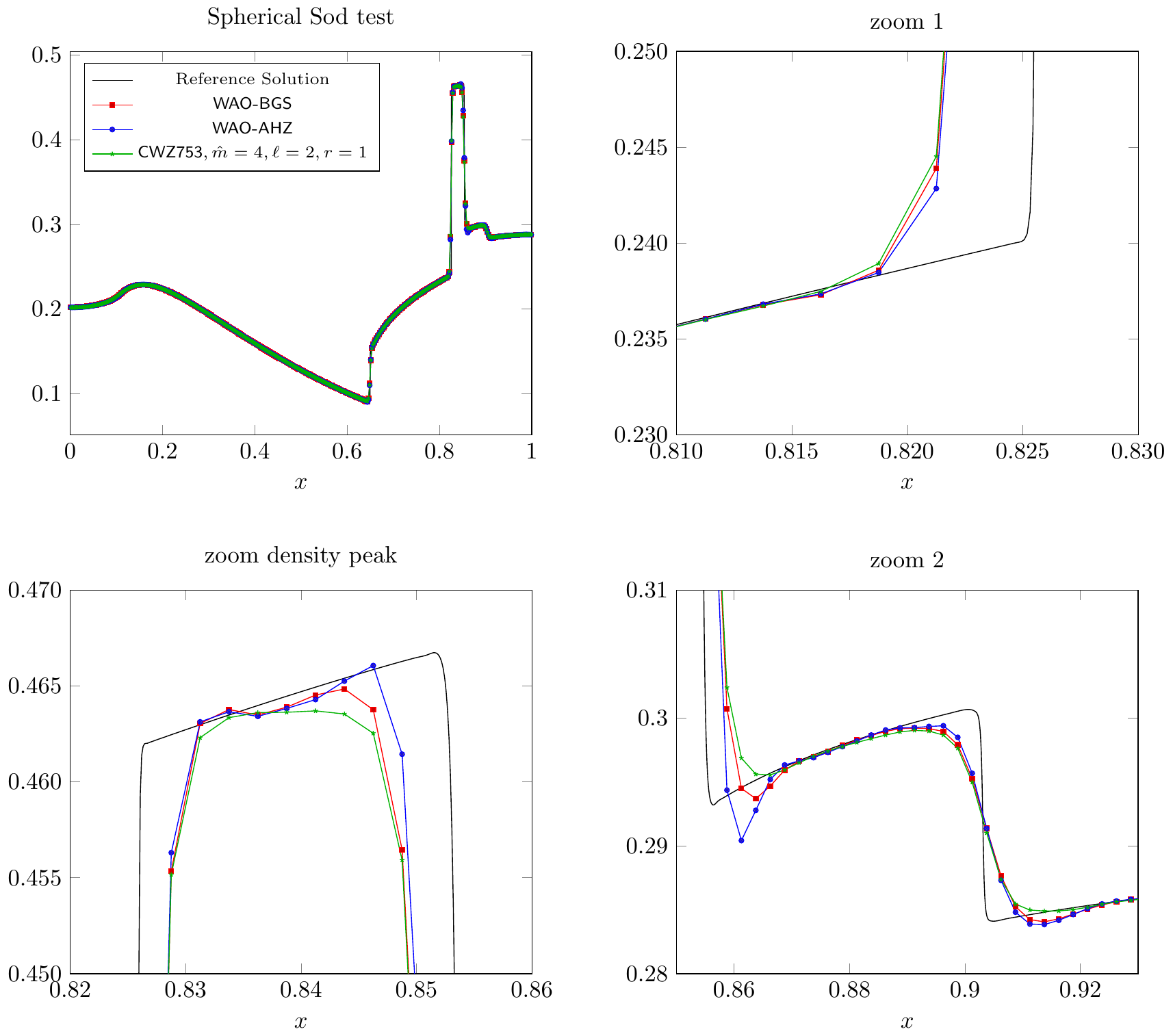}
	\caption{Numerical solution of the radially spherical Sod's explosion problem with schemes of order $7$ with $400$ cells and zoom on three regions of the density profile. Reconstructions are performed along characteristic variables.\label{fig:sodwall7}}
\end{figure}

The density profile
for $d=3$ at final time obtained with $N=400$ cells using the $\CWENOZAO$ and the two $\WENOAO$ schemes is shown in Figure~\ref{fig:sodwall7}, restricted to the domain $\sigma\in[0,1]$. The zoom in the density profile are centered on the bottom part of the discontinuity before the density peak, on the density peak and on the wave between the density peak and the right boundary. All the reconstructions are computed along characteristic variables in order to avoid spurious oscillations. We observe that the $\CWENOZAO$ scheme is more diffusive in the approximation of the discontinuities around the density peak. This is due to the choice of $\hat{m}=4$ and we recall that we do not perform tuning on the parameters. Actually, in this region of the density profile we observe a better accuracy of the $\CWENOZAO$ reconstruction when using $\hat{m}=2$. However, with $\hat{m}=4$ we can avoid the undershoots exhibited by the $\WENOAO$ reconstructions, see the right most panel of Figure~\ref{fig:sodwall7}.


\subsection{Computational efficiency} \label{ssec:efficiency}

All the numerical experiments have shown that the adaptive order $\CWENOZAO$ scheme of order $7$ performs similarly to the two $\WENOAO$ schemes of order $7$ in terms of accuracy and that their parameters may often be tuned to obtain less oscillatory reconstructions. This is not surprising since these schemes share the same idea of nonlinearly blending reconstruction polynomials which cover very high order gap. However, as already discussed in Section~\ref{sec:AOreconstruction}, the $\CWENOZAO$ scheme is defined in such a way definition of intermediate reconstructions is not needed. This make the $\CWENOZAO$ scheme more efficient in terms of computational cost.

In order to compare the computational efficiency of the adaptive order reconstructions, we compiled {\tt claw1dArena} in release mode, which corresponds to {\tt -O3} optimization level of the Gnu C++ compiler. We measured the computational time required by each scheme to solve the numerical experiments proposed in the previous sections, by using three grids, precisely $N=200,400,800$ cells\revision{, and a CFL of $0.45$}. We repeated this step five times and then we computed the median of the CPU times. 
Table~\ref{tab:cputime} contains the results obtained on a quadcore Intel Core i7-6600U with clock speed 2.60GHz (left) and on a dualcore Intel Core i3-2100T with clock speed 2.50GHz (right).

Table~\ref{tab:cputime} shows the CPU times in seconds for the $\CWENOZAO$ reconstruction. Instead, for the $\WENOAO$ schemes we provide information on the difference in percentage compared to time required by the $\CWENOZAO$ scheme. The CPU times for the gas-dynamics problems of Shu-Osher and spherical Sod are computed without employing reconstruction along characteristic variables. Instead, the CPU times for the Lax test are computed by using reconstruction along characteristic variables. We observe that, as we expected, although the accuracy of the schemes is comparable, the major difference is given by their computational cost. In fact, both $\WENOAO$ reconstructions require a larger computational time and therefore there is an increasing advantage in using the $\CWENOZAO$ type reconstruction.


\begin{table}
	\caption{Comparison of the computational times.}
	\label{tab:cputime}
	\centering
		\subfloat[Jiang-Shu test. \label{tab:timejshu} ]{
		\scriptsize
		\begin{tabular}{c|c|c|c|c|c|c|}
			\cline{2-7}
			&\multicolumn{3}{c|}{Core i7-6600U @ 2.60GHz}
			&\multicolumn{3}{c|}{Core i3-2100T @ 2.50GHz}
			\\ \hline
			\multicolumn{1}{|c|}{Cells} & \CWZ & \WAOBS & \WAOAHZ 
			& \CWZ & \WAOBS & \WAOAHZ \\ \hline
			\multicolumn{1}{|c|}{200}   
			& 9.987 s     & +7.87\%   & +11.15\%  
			& 14.39 s    & +9.95\%   & +13.00\%  \\ \hline
			\multicolumn{1}{|c|}{400}   
			& 38.45 s     & +9.25\%   & +11.93\%  
			& 57.16 s    & +10.07\%   & +12.84\%
			\\ \hline
			\multicolumn{1}{|c|}{800}   
			& 153 s       & +8.93\%   & +11.97\%  
			& 229.2 s    & +9.80\%   & +12.28\%  \\ \hline
		\end{tabular}
	}
\\
	\subfloat[Shu-Osher test with reconstruction along conservative variables. \label{tab:timeshockacoustic}]{
		\scriptsize
		\begin{tabular}{c|c|c|c|c|c|c|}
			\cline{2-7}
			&\multicolumn{3}{c|}{Core i7-6600U @ 2.60GHz}
			&\multicolumn{3}{c|}{Core i3-2100T @ 2.50GHz}
			\\ \hline
			\multicolumn{1}{|c|}{Cells} & \CWZ & \WAOBS & \WAOAHZ 
			& \CWZ & \WAOBS & \WAOAHZ \\ \hline
			\multicolumn{1}{|c|}{200}   
			& 3.06 s     & +10.30\%   & +17.29\%  
			& 4.094 s    & +11.13\%   & +18.70\%  \\ \hline
			\multicolumn{1}{|c|}{400}   
			& 12.42 s     & +10.52\%   & +17.75\%  
			& 16.54 s    & +11.27\%   & +18.76\%
			\\ \hline
			\multicolumn{1}{|c|}{800}   
			& 49.09 s       & +9.89\%   & +15.64\%  
			& 66.79 s    & +10.22\%   & +17.56\%  \\ \hline
		\end{tabular}
	}
	\\
	\subfloat[Lax test with reconstruction along characteristic variables. \label{tab:timelax}]{
		\scriptsize
		\begin{tabular}{c|c|c|c|c|c|c|}
			\cline{2-7}
			&\multicolumn{3}{c|}{Core i7-6600U @ 2.60GHz}
			&\multicolumn{3}{c|}{Core i3-2100T @ 2.50GHz}
			\\ \hline
			\multicolumn{1}{|c|}{Cells} & \CWZ & \WAOBS & \WAOAHZ 
			& \CWZ & \WAOBS & \WAOAHZ \\ \hline
			\multicolumn{1}{|c|}{200}   
			& 3.108 s     & +10.15\%   & +16.04\%  
			& 10.82 s    & +9.61\%   & +11.67\%  \\ \hline
			\multicolumn{1}{|c|}{400}   
			& 12.11 s     & +13.81\%   & +15.31\%  
			& 43 s    & +9.00\%   & +10.32\%
			\\ \hline
			\multicolumn{1}{|c|}{800}   
			& 47.92 s       & +13.16\%   & +19.70\%  
			& 172.2 s    & +9.22\%   & +9.93\%  \\ \hline
		\end{tabular}
	}
	\\
	\subfloat[Spherical Sod test with reconstruction along conservative variables. \label{tab:timesod}]{
		\scriptsize
		\begin{tabular}{c|c|c|c|c|c|c|}
			\cline{2-7}
			&\multicolumn{3}{c|}{Core i7-6600U @ 2.60GHz}
			&\multicolumn{3}{c|}{Core i3-2100T @ 2.50GHz}
			\\ \hline
			\multicolumn{1}{|c|}{Cells} & \CWZ & \WAOBS & \WAOAHZ 
			& \CWZ & \WAOBS & \WAOAHZ \\ \hline
			\multicolumn{1}{|c|}{200}   
			& 4.352 s     & +14.14\%   & +17.32\%  
			& 5.332 s    & +11.52\%   & +19.70\%  \\ \hline
			\multicolumn{1}{|c|}{400}   
			& 17.05 s     & +9.16\%   & +15.88\%  
			& 21.21 s    & +10.62\%   & +18.67\%
			\\ \hline
			\multicolumn{1}{|c|}{800}   
			& 65.22 s       & +12.74\%   & +18.72\%  
			& 84.66 s    & +10.78\%   & +18.36\%  \\ \hline
		\end{tabular}
	}
\end{table}

\section{Conclusion} \label{sec:AOconclusion}

In this paper we have presented a novel approach to adaptive order essentially non-oscillatory reconstruction. Our technique relies on the \CWENOZ\ reconstruction with an optimal polynomial of degree $G$ and, in order to preserve the optimal accuracy on smooth data, the candidate polynomials are split into two families. The first family includes all polynomials with degree at least $G/2$ and these are given $\Ogrande(1)$ linear weights, as in the usual weighted essentially non-oscillatory reconstructions. The second family is composed of those candidate polynomials with degree lower than $G/2$, which would lower the accuracy on smooth data if given $\Ogrande(1)$ linear weights. These latter are thus associated to infinitesimal linear weights, proportional to $\DX^{r}$ with $r$ larger than the gap between $G/2$ and the degree of the polynomial.

The main result of this paper is the analysis of the reconstruction, which gives sufficient conditions on the reconstruction parameter that guarantee the optimal convergence rates on smooth data.

As an application we have constructed and tested a $\CWZ$ that mimics the accuracy of $\WENOAO(7,5,3)$ reconstructions already present in the literature. 
The novel reconstruction shows similar accuracy to $\WENOAO(7,5,3)$ of \cite{BGS:wao,AHZ18:wao} on smooth data and, in some numerical tests, slightly reduces the onset of the spurious oscillations.
Since our approach computes the set of nonlinear weights in one go, without resorting to iterative or hierarchic constructions, interesting savings in computational time could be demonstrated.

\revision{We point out that Proposition~\ref{th:proposition}, as its analogous result in~\cite{CSV19:cwenoz}, is valid independently on the type of grid and number of space dimensions. Therefore, any choice of $\tau$ as in~\eqref{eq:tauAO} can be employed to define $\CWENOZAO$ in much more general situations than the ones described in this paper. In particular, we expect that the present approach can yield very fast and accurate reconstructions in more than one space dimensions, since its CPU time savings could be combined with the avoidance of dimensional splitting that is made possible by the \CWENOZ\ approach.} 

Even though we have considered the finite volume formulation, we believe that the same results extend straightforwardly to the finite difference case. Furthermore, also the recently proposed
Multiresolution \WENO\ schemes
\cite{ZhuShu:18:MRWENO,ZhuShu:19:MRWENOtri}, which
are based on a hierarchical computation for the nonlinear weights,  might be amenable to be sped up following the ideas of this paper.

\bibliographystyle{plain}

\bibliography{CSVCWENOZbiblio} 

\end{document}